\def\arxivformat{1}
\newcommand{\dt}{\,dt}
\newcommand{\smiddle}{\mathrel{}\middle|\mathrel{}} %
\newcommand{\eps}{\epsilon}
\def\balign#1\ealign{\begin{align}#1\end{align}}
\def\baligns#1\ealigns{\begin{align*}#1\end{align*}}
\def\balignat#1\ealign{\begin{alignat}#1\end{alignat}}
\def\balignats#1\ealigns{\begin{alignat*}#1\end{alignat*}}
\def\bitemize#1\eitemize{\begin{itemize}#1\end{itemize}}
\def\benumerate#1\eenumerate{\begin{enumerate}#1\end{enumerate}}
\newcommand{\qtext}[1]{\quad\text{#1}\quad} 
\let\originalleft\left
\let\originalright\right
\renewcommand{\left}{\mathopen{}\mathclose\bgroup\originalleft}
\renewcommand{\right}{\aftergroup\egroup\originalright}
\def\Ito{It\^o\xspace}
\def\tinycitep*#1{{\tiny\citep*{#1}}}
\def\tinycitealt*#1{{\tiny\citealt*{#1}}}
\def\tinycite*#1{{\tiny\cite*{#1}}}
\def\smallcitep*#1{{\scriptsize\citep*{#1}}}
\def\smallcitealt*#1{{\scriptsize\citealt*{#1}}}
\def\smallcite*#1{{\scriptsize\cite*{#1}}}
\def\mbb#1{\mathbb{#1}}
\def\textprod{{\textstyle\prod}} %
\def\reals{\mathbb{R}} %
\def\R{\mathbb{R}}
\def\<{\left\langle} %
\def\>{\right\rangle}
\def\defeq{\triangleq} %
\def\texthalf{{\textstyle\frac{1}{2}}}
\newcommand{\textfrac}[2]{{\textstyle\frac{#1}{#2}}} 
\def\norm#1{\left\|{#1}\right\|} %
\newcommand{\twonorm}[1]{\norm{#1}_2} %
\newcommand{\opnorm}[1]{\norm{#1}_{op}} %
\newcommand{\inner}[2]{\langle{#1},{#2}\rangle} %
\newcommand{\binner}[2]{\left\langle{#1},{#2}\right\rangle} %
\def\maxarg#1{\max\left({#1}\right)} %
\def\E{\mbb{E}} %
\def\Earg#1{\E\left[{#1}\right]}
\def\Esubarg#1#2{\E_{#1}\left[{#2}\right]}
\renewcommand{\exp}[1]{\operatorname{exp}\left(#1\right)} %
\newcommand{\grad}{\nabla} %
\newcommand{\Hess}{\nabla^2} %
\newcommand{\deriv}[2]{\frac{d #1}{d #2}} %
\newcommand{\eqnref}[1]{\eqref{eqn:#1}}
\newcommand{\lemref}[1]{Lemma~{\ref{lem:#1}}}
\newcommand{\secref}[1]{Section~{\ref{sec:#1}}}
\newcommand{\thmref}[1]{Theorem~{\ref{thm:#1}}}
\newcommand{\hset}[0]{\mathcal{H}} %
\newcommand{\wassset}{\mathcal{W}_{\norm{\cdot}}} %
\newcommand{\blset}{BL_{\norm{\cdot}}} %
\newcommand{\smoothset}{\mathcal{M}_{\norm{\cdot}}} %
\newcommand{\generator}[1]{\mathcal{A}{#1}} %
\newcommand{\genarg}[2]{(\generator{#1})({#2})} %
\newcommand{\ipm}{d_\hset} %
\newcommand{\wass}{d_{\wassset}} %
\newcommand{\bl}{d_{\blset}} %
\newcommand{\smooth}{d_{\smoothset}} %
\newcommand{\trans}[2]{P_{#1}{#2}} %
\newcommand{\transarg}[3]{(P_{#1}{#2})({#3})} %
\newcommand{\qvar}[0]{x} %
\newcommand{\pvar}[0]{z}%
\newcommand{\QVAR}[0]{\MakeUppercase{\qvar}} %
\newcommand{\PVAR}[0]{\MakeUppercase{\pvar}} %
\newcommand{\process}[2]{\PVAR_{#1,#2}} %
\renewcommand{\smoothset}{\mathcal{M}} %
\renewcommand{\wassset}{\mathcal{W}} %
\renewcommand{\blset}{\mathrm{BL}} %
\newcommand\abstractbody{We establish uniform bounds on the low-order derivatives of Stein equation solutions for a broad class of multivariate, strongly log-concave target distributions.
These ``Stein factor'' bounds deliver control over Wasserstein and related smooth function distances 
and are well-suited to analyzing the computable Stein discrepancy measures of Gorham and Mackey.
Our arguments of proof are probabilistic and feature the synchronous coupling of multiple overdamped Langevin diffusions.}
\newcommand\titlebody{Multivariate Stein Factors for a Class of Strongly Log-concave Distributions}
\begin{document}

\section{Introduction}
In 1972, Stein~\cite{Stein72} introduced a powerful method for bounding 
the maximum expected discrepancy, $\ipm(Q,P) \defeq \sup_{h \in \hset} |\Esubarg{Q}{h(\QVAR)} - \Esubarg{P}{h(\PVAR)} |$,
between a target distribution $P$ and an approximating distribution $Q$.
Stein's method classically proceeds in three steps:
\benumerate
\item First, one identifies a linear operator $\generator{}$ that generates mean-zero functions under the target distribution.
A common choice for a continuous target on $\reals^d$ is the infinitesimal generator of the overdamped Langevin diffusion%
\footnote{The modifier ``overdamped'' is derived from the physical analogy of an oscillator damped by friction.
}
(also known as the Smoluchowski dynamics)
\citep[Secs. 6.5 and 4.5]{Pavliotis14} with stationary distribution $P$:
\balign\label{eqn:langevin-generator}
\textstyle\genarg{u}{x} = \frac{1}{2}\inner{\grad u(x)}{\grad\log p(x)} + \frac{1}{2}\inner{\grad}{\grad u(x)}.
\ealign
Here, $p$ represents the density of $P$ with respect to Lebesgue measure.
\item Next, one shows that, for every test function $h$ in a convergence-determining class $\hset$, the \emph{Stein equation}
\balign\label{eqn:stein-equation-generator}
h(x) - \Esubarg{P}{h(\PVAR)} 
	= \genarg{u_h}{x}
\ealign
admits a solution $u_h$ in a set $\mathcal{U}$ of functions with uniformly bounded low-order derivatives.
These uniform derivative bounds are commonly termed \emph{Stein factors}.
\item Finally, one uses whatever tools necessary to upper bound the \emph{Stein discrepancy}%
\footnote{Not to be confused with the ``Stein discrepancy'' of \cite{LedouxNoPe15}, which names an entirely different quantity.}
\balign\label{eqn:stein-discrepancy}
\sup_{u \in \mathcal{U}} |\Esubarg{Q}{\genarg{u}{\QVAR}}| 
	= \sup_{u \in \mathcal{U}} |\Esubarg{Q}{\genarg{u}{\QVAR}} - \Esubarg{P}{\genarg{u}{\PVAR}}|,
\ealign
which by construction upper bounds the reference metric $\ipm(Q,P)$.
\eenumerate

To date, this recipe has been successfully used with the Langevin operator \eqnref{langevin-generator} to obtain explicit approximation error bounds for a wide variety of univariate targets $P$~\citep[see, e.g.,][]{ChenGoSh11,ChatterjeeSh11}.\footnote{In the univariate setting, the operator \eqnref{langevin-generator} is commonly called \emph{Stein's density operator}.}
The same operator has been used to analyze multivariate Gaussian approximation~\citep{Barbour90,Gotze91,ReinertRo09,ChatterjeeMe08,Meckes09,NourdinPeRe10}, but few other multivariate distributions have established Stein factors.
To extend the reach of the multivariate literature, we derive uniform Stein factor bounds for a broad class of strongly log-concave target distributions in \thmref{concave-stein-factors}.
The result covers common Bayesian target distributions, including Bayesian logistic regression posteriors under Gaussian priors, and explicitly relates the Stein discrepancy \eqnref{stein-discrepancy} and practical Monte Carlo diagnostics based thereupon~\citep{GorhamMa15} to standard probability metrics, like the Wasserstein distance.

\textbf{Notation and terminology}\quad 
We let $C^k(\reals^d)$ denote the set of real-valued functions on $\reals^d$ with $k$ continuous derivatives.
We further let $\twonorm{\cdot}$ denote the $\ell_2$ norm on $\reals^d$ 
and define the operator norms $\opnorm{v} \defeq \twonorm{v}$ for vectors $v\in \reals^d$, $\opnorm{W} \defeq \sup_{v\in \reals^d: \twonorm{v}=1}{\twonorm{Wv}}$ for matrices $W\in \reals^{d \times d}$ ,
and $\opnorm{T} \defeq \sup_{v\in \reals^d: \twonorm{v}=1}{\opnorm{T[v]}}$ for tensors $T \in \reals^{d\times d\times d}$.
We say a function $f \in C^2(\reals^d)$ is \emph{$k$-strongly concave} for $k > 0$ if 
\[v^\top\Hess f(x) v \leq -k\twonorm{v}^2,\qtext{for all} x, v \in\reals^d,\] 
and we term a function \emph{$k$-strongly log-concave} if $\log f$ is $k$-strongly concave.
We finally let $\grad^0h \defeq h$ for all functions $h$ and define the Lipschitz constants
\baligns
M_k(h) &\defeq \sup_{x,y\in\reals^d, x\neq y} \frac{\opnorm{\grad^{k-1} h(x) - \grad^{k-1} h(y)}}{\twonorm{x-y}}\ \ \text{for all}\ h\in C^{k-1}(\reals^d) \text{ and integers } k \geq 1. 
\ealigns

\section{Stein factors for strongly log-concave distributions}
Consider a target distribution $P$ on $\reals^d$ with strongly log-concave density $p$.
The following result bounds the derivatives of Stein equation solutions  
in terms of the smoothness of $\log p$ and the underlying test function $h$.
The proof, found in \secref{concave-stein-factors-proof}, is probabilistic, in the spirit of the generator method of Barbour~\cite{Barbour88} and Gotze~\cite{Gotze91}, and features the synchronous coupling of multiple overdamped Langevin diffusions.
\begin{theorem}[Stein factors for strongly log-concave distributions] \label{thm:concave-stein-factors}
Suppose that $\log p \in C^4(\reals^d)$ is $k$-strongly concave with $M_3(\log p) \leq L_3$ and $M_4(\log p) \leq L_4$.
For each $x \in\reals^d$, let $(\process{t}{x})_{t\geq0}$ represent the overdamped Langevin diffusion with infinitestimal generator
\eqnref{langevin-generator} and initial state $\process{0}{x} = x$.
Then, for each Lipschitz $h \in C^3(\reals^d)$, the function
\[
	u_h(x) \defeq \int_0^\infty \Esubarg{P}{h(\PVAR)} - \Earg{h(\process{t}{x})} \ dt
\]
solves the the Stein equation
\eqnref{stein-equation-generator}
and satisfies
\baligns
	 M_1(u_h) %
		\leq\ &\frac{2}{k}  M_1(h), \quad
	 M_2(u_h) %
		\leq \frac{2L_3}{k^2} M_1(h)+\frac{1}{k} M_2(h),\ and \\
	 M_3(u_h) %
		\leq\ &\left(\frac{6L_3^2}{k^3} +\frac{L_4}{k^2}\right) M_1(h)
		+ \frac{3L_3}{k^2} M_2(h) + \frac{2}{3k}M_3(h).
\ealigns
\end{theorem}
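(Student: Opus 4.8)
The plan is to work at the level of the Langevin semigroup $P_t h(x) \defeq \Earg{h(\process{t}{x})}$, so that $u_h(x) = -\int_0^\infty (P_t h(x) - \Esubarg{P}{h(\PVAR)})\dt$, and to extract every estimate from a single coupling computation. First I would verify that the integral converges: coupling $\process{\cdot}{x}$ to a stationary copy started from a draw $\PVAR \sim P$ through a shared Brownian motion and invoking the pathwise contraction below shows that $|P_t h(x) - \Esubarg{P}{h(\PVAR)}| \leq M_1(h)\, e^{-kt/2}\, \Esubarg{P}{\twonorm{x - \PVAR}}$, whose right-hand side is integrable in $t$ because a $k$-strongly log-concave law has a finite first moment. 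The Stein equation \eqnref{stein-equation-generator} then follows from the standard semigroup calculus: using $\frac{d}{dt}P_t h = \generator{}(P_t h)$ (the backward Kolmogorov equation), the fact that $\generator{}$ annihilates constants, and $P_t h(x) \to \Esubarg{P}{h(\PVAR)}$ as $t \to \infty$, one gets $\genarg{u_h}{x} = -\int_0^\infty \frac{d}{dt}P_t h(x)\dt = h(x) - \Esubarg{P}{h(\PVAR)}$.

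The engine of the derivative bounds is the \emph{synchronous coupling}: for $x, y \in \reals^d$, drive $\process{\cdot}{x}$ and $\process{\cdot}{y}$ by the same Brownian motion, so that $\mbiDelta_t \defeq \process{t}{x} - \process{t}{y}$ solves the \emph{pathwise} ODE $\dot{\mbiDelta}_t = \frac12(\grad\log p(\process{t}{x}) - \grad\log p(\process{t}{y}))$, the Brownian parts having cancelled. Writing the increment of $\grad\log p$ as an average of Hessians along the segment joining the two states and using $\Hess\log p \psdle -k\ident$ gives $\frac{d}{dt}\twonorm{\mbiDelta_t}^2 \leq -k\twonorm{\mbiDelta_t}^2$, hence $\twonorm{\process{t}{x} - \process{t}{y}} \leq e^{-kt/2}\twonorm{x-y}$ almost surely. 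Since $\grad\log p \in C^3(\reals^d)$, the flow $x \mapsto \process{t}{x}$ is three times differentiable, and its first, second, and third variation processes $J_t \defeq \grad_x\process{t}{x}$, $H_t \defeq \Hess_x\process{t}{x}$, $G_t \defeq \grad^3_x\process{t}{x}$ solve linear ODEs sharing the homogeneous part $Y \mapsto \frac12\Hess\log p(\process{t}{x})Y$: namely $\dot J_t = \frac12\Hess\log p(\process{t}{x})J_t$ with $J_0 = \ident$; $\dot H_t = \frac12\Hess\log p(\process{t}{x})H_t + \frac12\grad^3\log p(\process{t}{x})[J_t, J_t]$ with $H_0 = 0$; and $\dot G_t = \frac12\Hess\log p(\process{t}{x})G_t$ plus an inhomogeneity assembled from $\grad^4\log p(\process{t}{x})[J_t, J_t, J_t]$ and three terms of the form $\grad^3\log p(\process{t}{x})[H_t, J_t]$, with $G_0 = 0$. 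The common homogeneous flow has propagator $\Phi_{t,s}$ obeying $\opnorm{\Phi_{t,s}} \leq e^{-k(t-s)/2}$ by the same strong-concavity computation; in particular $\opnorm{J_t} \leq e^{-kt/2}$, and then variation of constants, the smoothness bounds $M_3(\log p) \leq L_3$ and $M_4(\log p) \leq L_4$, and the elementary estimate $\int_0^t e^{-k(t-s)/2} e^{-\ell ks/2}\ds \leq \frac{2}{k}e^{-kt/2}$ (valid for $\ell \geq 2$) yield $\opnorm{H_t} \leq \frac{L_3}{k}e^{-kt/2}$ and $\opnorm{G_t} \leq (\frac{L_4}{2k} + \frac{3L_3^2}{k^2})e^{-kt/2}$.

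Given these variation bounds, I would differentiate $P_t h(x) = \Earg{h(\process{t}{x})}$ under the expectation and expand by the chain rule: $\grad P_t h(x)[v] = \Earg{\inner{\grad h(\process{t}{x})}{J_t v}}$; $\Hess P_t h(x)[v,w] = \Earg{\inner{\Hess h(\process{t}{x})J_t v}{J_t w} + \inner{\grad h(\process{t}{x})}{H_t[v,w]}}$; and a five-term formula for $\grad^3 P_t h(x)[v,w,z]$ collecting $\grad^3 h(\process{t}{x})[J_t v, J_t w, J_t z]$, three permuted copies of $\inner{\Hess h(\process{t}{x})H_t[v,w]}{J_t z}$, and $\inner{\grad h(\process{t}{x})}{G_t[v,w,z]}$. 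Taking operator norms, substituting the decay estimates for $J_t, H_t, G_t$ together with the definitions of $M_1(h), M_2(h), M_3(h)$, and finally using $\opnorm{\grad^j u_h(x)} \leq \int_0^\infty \opnorm{\grad^j P_t h(x)}\dt$ with $\int_0^\infty e^{-jkt/2}\dt = \frac{2}{jk}$ reproduces the three claimed Stein factors; for instance, at second order one integrates $M_2(h)e^{-kt} + \frac{L_3}{k}M_1(h)e^{-kt/2}$ to $\frac1k M_2(h) + \frac{2L_3}{k^2}M_1(h)$, and the first- and third-order cases are the same bookkeeping.

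I expect the main obstacle to be analytic housekeeping rather than the algebra: establishing that $x \mapsto \process{t}{x}$ is almost surely $C^3$ with the stated variation ODEs and with enough integrability to justify differentiating under $\Earg{\cdot}$, under $\int_0^\infty\dt$, and through $\generator{}$, and then confirming that $u_h \in C^3(\reals^d)$ with bounded derivatives so that $M_k(u_h) = \sup_x \opnorm{\grad^k u_h(x)}$. The hypotheses are calibrated for precisely this: $k$-strong concavity makes the propagator a genuine contraction and every time integral finite, $\log p \in C^4(\reals^d)$ supplies the threefold differentiability of the flow with locally bounded derivatives, and the decay factor $e^{-kt/2}$ is what carries all the estimates.
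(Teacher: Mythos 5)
Your route---differentiating the stochastic flow $x \mapsto \process{t}{x}$ and controlling the variation processes $J_t$, $H_t$, $G_t$ by variation of constants against the contracting propagator---is a genuinely different execution from the paper's, and your bookkeeping is right: the decay rates $e^{-kt/2}$, $\tfrac{L_3}{k}e^{-kt/2}$ and $\bigl(\tfrac{L_4}{2k}+\tfrac{3L_3^2}{k^2}\bigr)e^{-kt/2}$, fed through the chain-rule expansion of $\grad^j \Earg{h(\process{t}{x})}$ and integrated in $t$, reproduce exactly the three stated Stein factors. The paper never differentiates the flow or the semigroup: it works throughout with divided differences of synchronously coupled diffusions started from up to eight points, applies \Ito's lemma to $e^{kt/2}\twonorm{\cdot}$ of the differenced processes (\lemref{synchronous}), passes the resulting bounds through a deterministic Taylor-difference estimate (\lemref{differences}), and then constructs the derivatives of $u_h$ as limits of Cauchy difference quotients (\lemref{second-order-u} and \lemref{third-order-u}), concluding only that $u_h \in C^2(\reals^d)$ with Lipschitz Hessian. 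That design is precisely to avoid the steps you defer to ``analytic housekeeping,'' and those steps are where the substance lies rather than routine checking: (i) strong concavity is one-sided and $L_3, L_4$ bound only increments, so $\Hess \log p$ need not be bounded and the drift is merely locally Lipschitz; almost-sure $C^3$ dependence of the flow with your variation ODEs is therefore not an off-the-shelf fact and, once localized, amounts to the same difference-quotient estimates the paper proves directly; (ii) differentiating under $\Earg{\cdot}$ and under $\int_0^\infty \dt$ requires the uniform domination that the coupling bounds are there to supply; and (iii) your Stein-equation step silently interchanges $\generator{}$ with the time integral and uses a pointwise backward Kolmogorov equation, whereas the paper justifies this by establishing strong continuity on a weighted Banach space (\lemref{langevin-properties}) and invoking the closedness of the generator via Ethier--Kurtz. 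In short: correct strategy and constants, and a legitimately more streamlined derivation of the bounds once flow regularity is granted, but as written the regularity and interchange claims are asserted rather than proved, and under the paper's hypotheses they are not routine.
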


\thmref{concave-stein-factors} implies that the Stein discrepancy \eqnref{stein-discrepancy} with set
\[
	\mathcal{U} 
		\defeq \left\{ u \in C^2(\reals^d)  \bigg|
		\maxarg{\frac{M_1(u)}{\frac{2}{k}},\frac{M_2(u)}{\frac{2L_3}{k^2}+\frac{1}{k}}, \frac{M_3(u)}{(\frac{6L_3^2}{k^3}+\frac{L_4+3L_3}{k^2}+\frac{2}{3k})}} \leq 1		\right\}
\]
bounds the \emph{smooth function distance}
$\smooth(Q,P) = \sup_{h \in \smoothset} |\Esubarg{Q}{h(\QVAR)} - \Esubarg{P}{h(\PVAR)} |$ for
\[
	\textstyle\smoothset 
		\defeq \left\{h \in C^3(\reals^d) \smiddle
			\maxarg{M_1(h), M_2(h), M_3(h)} \leq 1 \right\}.
\]
Our next result shows that control over the smooth function distance also grants control over the \emph{$L_1$-Wasserstein distance}
(also known as the Kantorovich-Rubenstein or earth mover's distance), $\wass(Q,P) = \sup_{h \in \wassset} |\Esubarg{Q}{h(\QVAR)} - \Esubarg{P}{h(\PVAR)} |$, 
and the \emph{bounded-Lipschitz metric}, $\bl(Q,P) = \sup_{h \in \blset} |\Esubarg{Q}{h(\QVAR)} - \Esubarg{P}{h(\PVAR)} |$,
which exactly metrizes convergence in distribution on $\reals^d$. These metrics govern the test function classes
\[
\textstyle\wassset\defeq\{h : \reals^d\to\reals \mid M_1(h) \leq 1 \} 
\qtext{and}
\textstyle\blset\defeq \wassset\cap\{h : \reals^d\to\reals \mid \sup_{x\in\reals^d} |h(x)| \leq 1 \}.
\]
\begin{lemma}[Smooth-Wasserstein inequality]\label{lem:smooth-wass}
If $\mu$ and $\nu$ are probability measures on $\reals^d$ with finite means, and $G \in\reals^d$ is a standard normal random vector, then
\baligns
\max(\bl(\mu,\nu), \smooth(\mu,\nu)) \leq \wass(\mu,\nu) \leq 3\maxarg{\smooth(\mu, \nu), \sqrt[3]{\smooth(\mu, \nu)\sqrt{2}\,\Earg{\twonorm{G}}^{2}}}.
\ealigns
\end{lemma}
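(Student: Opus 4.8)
The left-hand inequality is immediate from set inclusions: $\blset\subseteq\wassset$, since the bounded-Lipschitz class merely adds a uniform-boundedness constraint, and $\smoothset\subseteq\wassset$, since every $h\in\smoothset$ obeys $M_1(h)\le 1$. Hence the suprema over the smaller classes are no larger, i.e.\ $\max(\bl(\mu,\nu),\smooth(\mu,\nu))\le\wass(\mu,\nu)$, which is finite because $\mu,\nu$ have finite means. The content is the right-hand inequality, and the plan is Gaussian mollification. Fix $h\in\wassset$ (so $M_1(h)\le 1$), let $\gamma_t$ be the density of $\sqrt t\,G$ for a parameter $t>0$, and set $h_t\defeq h*\gamma_t$, i.e.\ $h_t(x)=\Earg{h(x+\sqrt t\,G)}$; then $h_t\in C^\infty(\reals^d)$ with at most linear growth, so all expectations below are finite. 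Decomposing
\[
\Esubarg{\mu}{h}-\Esubarg{\nu}{h}=\Esubarg{\mu}{h-h_t}+\left(\Esubarg{\mu}{h_t}-\Esubarg{\nu}{h_t}\right)+\Esubarg{\nu}{h_t-h}
\]
and using $\sup_x|h(x)-h_t(x)|\le\Earg{\twonorm{\sqrt t\,G}}\,M_1(h)=\sqrt t\,\Earg{\twonorm{G}}$ on the first and third terms, the task is to control the middle term by bounding the low-order derivatives of $h_t$, for which $M_k(h_t)=\sup_x\opnorm{\grad^k h_t(x)}$ since $h_t$ is smooth.

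Since $\grad h_t=(\grad h)*\gamma_t$ with $\twonorm{\grad h(\cdot)}\le M_1(h)$ almost everywhere, $M_1(h_t)\le M_1(h)$. Keeping one derivative on $h$ and moving the remaining one or two onto the smooth kernel $\gamma_t$ gives, writing $W\defeq\sqrt t\,G$ and taking unit vectors $u,v,w\in\reals^d$,
\[
\grad^2 h_t(x)[u,v]=\frac1t\Earg{\inner{\grad h(x+W)}{u}\,v^\top W},\qquad
\grad^3 h_t(x)[u,v,w]=\frac1{t^2}\Earg{\inner{\grad h(x+W)}{u}\left(v^\top W\,w^\top W-t\,v^\top w\right)}.
\]
Using $|\inner{\grad h(x+W)}{u}|\le M_1(h)$ pointwise together with $\Earg{|v^\top W|}=\sqrt{2t/\pi}$, the first identity yields $M_2(h_t)\le M_1(h)\sqrt{2/(\pi t)}$. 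For the second, note $v^\top W\,w^\top W-t\,v^\top w=t\left((v^\top G)(w^\top G)-v^\top w\right)$, and $(v^\top G)(w^\top G)$ has mean $v^\top w$ and — by Isserlis' formula $\Earg{(v^\top G)^2(w^\top G)^2}=1+2(v^\top w)^2$ — variance $1+(v^\top w)^2\le 2$, hence mean absolute deviation at most $\sqrt2$; therefore $M_3(h_t)\le M_1(h)\,\sqrt2/t$. With $M_1(h)\le 1$, $h_t/\alpha(t)\in\smoothset$ for $\alpha(t)\defeq\max\left(1,\sqrt{2/(\pi t)},\sqrt2/t\right)=\max(1,\sqrt2/t)$ (the middle quantity is always dominated by the other two), so $\left|\Esubarg{\mu}{h_t}-\Esubarg{\nu}{h_t}\right|\le\alpha(t)\,\smooth(\mu,\nu)$.

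Combining the three pieces and taking the supremum over $h\in\wassset$, for every $t>0$
\[
\wass(\mu,\nu)\le 2\sqrt t\,\Earg{\twonorm{G}}+\max(1,\sqrt2/t)\,\smooth(\mu,\nu).
\]
Write $m\defeq\Earg{\twonorm{G}}$ and $s\defeq\smooth(\mu,\nu)$. If $s\ge 2^{1/4}m$, take $t=\sqrt2$: the right-hand side is $2^{5/4}m+s\le 2s+s=3s$, and in this regime $s=\max\left(s,(\sqrt2\,m^2 s)^{1/3}\right)$. If $s<2^{1/4}m$, take $t=(\sqrt2\,s/m)^{2/3}<\sqrt2$, so that $\max(1,\sqrt2/t)=\sqrt2/t$ and the right-hand side is $2m\sqrt t+\sqrt2\,s/t=3(\sqrt2\,m^2 s)^{1/3}=3\max\left(s,(\sqrt2\,m^2 s)^{1/3}\right)$. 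In either case $\wass(\mu,\nu)\le 3\max\left(\smooth(\mu,\nu),\sqrt[3]{\smooth(\mu,\nu)\sqrt2\,\Earg{\twonorm{G}}^2}\right)$, which is the claim.

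The one delicate step is the bound $M_3(h_t)\le\sqrt2\,M_1(h)/t$, and specifically that its constant is \emph{dimension-free}: pushing all three derivatives onto $\gamma_t$, or keeping one on $h$ but estimating the resulting quadratic Gaussian weight crudely, leaves a factor growing with $d$ that would spoil both the constant $3$ and the factor $\sqrt2\,\Earg{\twonorm{G}}^2$. The resolution is to keep exactly one derivative on $h$ — so that $|\inner{\grad h}{u}|$ stays bounded by $M_1(h)$ rather than growing with $\twonorm{W}$ — and to observe that the surviving weight is a \emph{centered} quadratic form in $G$, whose mean absolute deviation is bounded by its standard deviation $(1+(v^\top w)^2)^{1/2}\le\sqrt2$.
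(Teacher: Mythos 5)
Your proof is correct and follows essentially the same route as the paper: Gaussian mollification $h_t$ of a $1$-Lipschitz $h$, the same three-term decomposition, the same dimension-free derivative bounds (in particular keeping one derivative on $h$ and bounding the centered quadratic Gaussian weight by its standard deviation $\sqrt{1+(v^\top w)^2}\le\sqrt2$), and the same optimization over the smoothing parameter. The only cosmetic differences are your parametrization $\sqrt t\,G$ in place of the paper's $tG$ and your case-split in $t$ versus the paper's single explicit choice $t=\sqrt[3]{\smooth(\mu,\nu)\sqrt2/\Earg{\twonorm{G}}}$, which yield the same bound.
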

\begin{proof}
The first inequality follows directly from the inclusions $\blset \subset \wassset$ and $\smoothset \subset \wassset$.

To establish the second, we fix $h\in\wassset$ and $t > 0$ and define the smoothed function
\[
	h_t(x) = \int_{\reals^d} h(x + t \pvar) \phi(\pvar) d\pvar \qtext{for each} x \in \reals^d,
\]
where $\phi$ is the density of a vector of $d$ independent standard normal variables.
We first show that $h_t$ is a close approximation to $h$ when $t$ is small.
Specifically, if $X \in\reals^d$ is an integrable random vector, independent of $G$, then,
by the Lipschitz assumption on $h$,
\baligns
 	|\Earg{h(X) - h_t(X)}|
		= |\Earg{h(X) - h(X+tG)}|
		\leq t\Earg{\twonorm{G}}.
\ealigns

We next show that the derivatives of $h_t$ are bounded.
Fix any $x\in\reals^d$.
Since $h$ is Lipschitz, it admits a weak gradient, $\grad h$, bounded uniformly by 1 in $\twonorm{\cdot}$.
We alternate differentiation and integration by parts to develop the representations
\baligns
	\grad h_t(x) 
		&= \int_{\reals^d} \grad h(x + t\pvar) \phi(\pvar) d\pvar 
		= \frac{1}{t} \int_{\reals^d} \pvar h(x + t\pvar) \phi(\pvar) d\pvar, \\
	\grad^2 h_t(x)
		&= \frac{1}{t} \int_{\reals^d} \grad h(x + t\pvar)\pvar^\top \phi(\pvar) d\pvar 
		= \frac{1}{t^2} \int_{\reals^d} (\pvar \pvar^\top - I) h(x + t\pvar) \phi(\pvar) d\pvar, \qtext{and} \\
	\grad^3 h_t(x)[v]
		&= \frac{1}{t^2} \int_{\reals^d} \grad h(x + t\pvar) v^\top(\pvar \pvar^\top - I)\phi(\pvar) d\pvar
\ealigns
for each $v\in\reals^d$.
The uniform bound on $\grad h$ now yields $M_1(h_t) \leq 1$,
\baligns
	M_2(h_t) %
		&\leq \frac{1}{t}\sup_{v\in\reals^d: \twonorm{v}=1} \int_{\reals^d} |\inner{\pvar}{v}|\phi(z)d\pvar 
		= \frac{1}{t}\sqrt{\frac{2}{\pi}}\sup_{v\in\reals^d: \twonorm{v}=1} \twonorm{v} 
		= 	\frac{1}{t}\sqrt{\frac{2}{\pi}}, \qtext{and}	\\
	M_3(h_t) %
		&\leq \frac{1}{t^2}\sup_{v,w\in\reals^d: \twonorm{v}=\twonorm{w}=1} \int_{\reals^d} |v^\top(\pvar \pvar^\top - I)w| \phi(z) d\pvar \\
		&\leq \frac{1}{t^2}\sup_{v,w\in\reals^d: \twonorm{v}=\twonorm{w}=1} \sqrt{\int_{\reals^d} |v^\top(\pvar \pvar^\top - I)w|^2 \phi(z) d\pvar} \\
		&= \frac{1}{t^2}\sup_{v,w\in\reals^d: \twonorm{v}=\twonorm{w}=1} \sqrt{\inner{v}{w}^2 + \twonorm{v}^2\twonorm{w}^2} 
		\leq \frac{\sqrt{2}}{t^2}. %
\ealigns
In the final equality we have used the fact that $\inner{v}{\PVAR}$ and $\inner{w}{\PVAR}$ are jointly normal with zero mean and covariance 
$\Sigma = \begin{bmatrix}\twonorm{v}^2&\inner{v}{w}\\ \inner{v}{w} &\twonorm{w}^2\end{bmatrix}$, so that the product $\inner{v}{\PVAR}\inner{w}{\PVAR}$
has the distribution of the off-diagonal element of the Wishart distribution~\cite{Wishart1928} with scale $\Sigma$ and $1$ degree of freedom.

We can now develop a bound for $\wass$ using our smoothed functions. 
Let
\[
\textstyle
b_t 
	\defeq \maxarg{1, \frac{1}{t} \sqrt{\frac{2}{\pi}}, \frac{\sqrt{2}}{t^2}} 
	= \maxarg{1, \frac{\sqrt{2}}{t^2}} 
\]
represent the maximum derivative bound of $h_t$, and select $X\sim\mu$ and $\PVAR\sim\nu$ to satisfy $\wass(\mu, \nu) = \Earg{\twonorm{X-\PVAR}}$.
If we let $c = \sqrt[3]{\smooth(\mu, \nu)\sqrt{2}\,\Earg{\twonorm{G}}^{2}}$, we then have
\baligns
	\wass(\mu, \nu) 
		&\leq \inf_{t>0}\sup_{h\in \wassset} |\Esubarg{\mu}{h(X) - h_t(X)}| + |\Esubarg{\nu}{h(\PVAR) - h_t(\PVAR)}| + |\Esubarg{\mu}{h_t(X)} - \Esubarg{\nu}{h_t(\PVAR)}| \\
		&\leq \inf_{t>0} 2t\Earg{\twonorm{G}} + b_t\smooth(\mu, \nu) 
		\leq 2 c + \maxarg{\smooth(\mu, \nu), c} 
		\leq 3\maxarg{\smooth(\mu, \nu), c},
\ealigns
where we have chosen $t = \sqrt[3]{\smooth(\mu,\nu)\sqrt{2}/\Earg{\twonorm{G}}}$ to achieve the third inequality.
\end{proof}
\begin{remark}
While \lemref{smooth-wass} targets Lipschitz test functions, comparable results can be obtained for non-smooth functions, like the indicators of convex
sets, by adapting the smoothing technique of \cite[Lem.~2.1]{Bentkus2003}. 
\end{remark}
\subsection{Example application to Bayesian logistic regression}
Before turning to the proof of \thmref{concave-stein-factors}, we illustrate a practical application to measuring the quality
of Monte Carlo or cubature sample points in Bayesian inference.
Consider the Bayesian logistic regression posterior density~\cite[see, e.g.,][]{GelmanCaStDuVeRu2014}
\baligns
p(\beta) \propto \underbrace{\exp{-\twonorm{\beta}^2/(2\sigma^2)}}_{\text{multivariate Gaussian prior}}\  \underbrace{\textprod_{l=1}^L {e^{y_l\inner{\beta}{v_l}}}{/(1+e^{\inner{\beta}{v_l}})}}_{\text{logistic regression likelihood}}
\ealigns
based on $L$ observed datapoints $(v_l, y_l)$ and a known prior hyperparameter $\sigma^2 > 0$.
In this standard model of binary classification, 
$\beta\in \R^d$ represents our inferential target, an unknown parameter vector with a multivariate Gaussian prior;
$y_l \in \{0,1\}$ is the class label of the $l$-th observed datapoint;
and $v_l\in \R^d$ is an associated vector of covariates. 

Since the normalizing constant of $p$ is unknown, it is common practice to approximate expectations $\int h(\beta) p(\beta) d\beta$ under $p$ with
sample estimates, $\frac{1}{n} \sum_{i=1}^n h(\beta_i)$, based on sample points $\beta_i \in \R^d$ drawn from a Markov chain or a cubature rule \cite{GelmanCaStDuVeRu2014}.
\thmref{concave-stein-factors} furnishes a way to uniformly bound the error of this approximation,
$|\frac{1}{n} \sum_{i=1}^n h(\beta_i) - \int h(\beta) p(\beta) d\beta|$,
for all sufficiently smooth functions $h$.

Concretely, we have, for all $\ell_2$ unit vectors $u_1,u_2,u_3,u_4 \in \R^d$,%
\baligns
	u_1^\top\Hess \log p(\beta)u_1 
		&=  -1/\sigma^2 - \sum_{l=1}^L  \frac{e^{\inner{\beta}{v_l}}}{(1+e^{\inner{\beta}{v_l}})^2} \inner{v_l}{u_1}^2 
		\leq -1/\sigma^2, \\  
	\grad^3 \log p(\beta)[u_1,u_2,u_3] &= - \sum_{l=1}^L  \frac{e^{\inner{\beta}{v_l}}(1-e^{\inner{\beta}{v_l}})}{(1+e^{\inner{\beta}{v_l}})^3} \prod_{m=1}^3\inner{v_l}{u_m}
		\leq \frac{\sum_{l=1}^L\twonorm{v_l}^3}{6\sqrt{3}},
		\text{ and} \\ 
	\grad^4 \log p(\beta)[u_1,u_2,u_3,u_4] &= - \sum_{l=1}^L  \frac{4e^{2\inner{\beta}{v_l}}-e^{3\inner{\beta}{v_l}}-e^{\inner{\beta}{v_l}}}{(1+e^{\inner{\beta}{v_l}})^4} \prod_{m=1}^4\inner{v_l}{u_m}
		\leq \frac{\sum_{l=1}^L\twonorm{v_l}^4}{8}.
\ealigns
Hence, \thmref{concave-stein-factors} applies with $k = 1/\sigma^2, L_3 = \frac{\sum_{l=1}^L\twonorm{v_l}^3}{6\sqrt{3}},$ and $L_4 = \frac{\sum_{l=1}^L\twonorm{v_l}^4}{8}$.
We may now plug the associated Stein factors 
\[\textstyle
(c_1,c_2,c_3) \defeq \left(2\sigma^2, \frac{\sigma^4\sum_{l=1}^L\twonorm{v_l}^3}{3\sqrt{3}} + \sigma^2, \frac{\sigma^6(\sum_{l=1}^L\twonorm{v_l}^3)^2}{18} + \frac{\sigma^4\sum_{l=1}^L\twonorm{v_l}^4}{8}+\frac{\sigma^4\sum_{l=1}^L\twonorm{v_l}^3}{2\sqrt{3}} + \frac{2\sigma^2}{3}\right)
\]
into the non-uniform graph Stein discrepancy of \cite{GorhamMa15} to obtain a computable
upper bound on $\smooth(Q,P)$ or $\wass(Q,P)$ for any discrete probability measure $Q = \frac{1}{n}\sum_{i=1}^n \delta_{\beta_i}$.

\section{Proof of {\thmref{concave-stein-factors}}}
\label{sec:concave-stein-factors-proof}
Before tackling the main proof, we will establish a series of useful lemmas.
We will make regular use of the following well-known Lipschitz property: %
\balign
M_k(h) &= \sup_{x \in \reals^d} \opnorm{\grad^k h(x)} \qtext{for all} h\in C^k(\reals^d)  \qtext{and} \text{each integer $k \geq 1$.} \label{eqn:lipk} 
\ealign
\subsection{Properties of overdamped Langevin diffusions}
Our first lemma enumerates several properties of the overdamped Langevin diffusion that will prove useful in the proofs to follow.
\begin{lemma}[Overdamped Langevin properties] \label{lem:langevin-properties}
	If $\log p \in C^2(\reals^d)$ is strongly concave, then the overdamped Langevin diffusion $(\process{t}{x})_{t\geq0}$
	with infinitesimal generator \eqref{eqn:langevin-generator} and $\process{0}{x}=x$
	is well-defined for all times $t\in [0,\infty)$, has stationary distribution $P$, and 
	satisfies \emph{strong continuity} on $L = \{f \in C^0(\reals^d) : \frac{|f(x)|}{1+\twonorm{x}^2}\to 0 \text{ as } \twonorm{x}\to \infty\}$
	with norm $\norm{f}_L \defeq \sup_{x\in\reals^d} \frac{|f(x)|}{1+\twonorm{x}^2}$,
	that is,	$\norm{\Earg{f(\process{t}{\cdot})} - f}_L \to 0$ as $t \to 0^+$ for all $f \in L$.
\end{lemma}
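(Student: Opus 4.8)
The plan is to realize $(\process{t}{x})_{t\geq0}$ as the unique strong solution of the stochastic differential equation $d\process{t}{x}=\tfrac{1}{2}\grad\log p(\process{t}{x})\dt+\dW_t$ with $\process{0}{x}=x$ and $W$ a standard $d$-dimensional Brownian motion, so that its infinitesimal generator is exactly \eqnref{langevin-generator}, and then to settle the three assertions in turn. Throughout I use two consequences of $k$-strong concavity of $\log p$: first, $\log p$ is coercive and hence attains its maximum at a unique point $x^{*}$ with $\grad\log p(x^{*})=0$; second, $\inner{\grad\log p(x)-\grad\log p(y)}{x-y}\leq -k\twonorm{x-y}^{2}$ for all $x,y\in\reals^{d}$, and in particular $\inner{\grad\log p(x)}{x-x^{*}}\leq -k\twonorm{x-x^{*}}^{2}$.

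\emph{Well-definedness and moment estimates.} Since $\log p\in C^{2}(\reals^{d})$ the drift is locally Lipschitz, so the equation has a unique strong solution up to an explosion time. For the Lyapunov function $V(x)\defeq\twonorm{x-x^{*}}^{2}$ one computes $\genarg{V}{x}=\inner{x-x^{*}}{\grad\log p(x)}+d\leq -kV(x)+d\leq d(1+V(x))$, and since $V$ is coercive this rules out explosion by Khasminskii's test, yielding a solution on all of $[0,\infty)$. Applying It\^o's formula to $V$, taking expectations after the usual localization, and invoking Gr\"onwall's inequality gives $\Earg{V(\process{t}{x})}\leq e^{-kt}V(x)+\tfrac{d}{k}$, whence $\Earg{\twonorm{\process{t}{x}}^{2}}\leq C(1+\twonorm{x}^{2})$ with $C$ independent of $t\geq0$; a maximal version $\Earg{\sup_{s\leq t}V(\process{s}{x})}\leq C(V(x)+1)$ for $t\leq1$ follows from Doob's inequality on the martingale part, and higher moments $\Earg{\twonorm{\process{t}{x}-x^{*}}^{2p}}\leq C_{p}(1+\twonorm{x}^{2p})$ follow identically from $V_{p}\defeq\twonorm{\cdot-x^{*}}^{2p}$. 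Finally, synchronously coupling two copies driven by the same $W$ cancels the noise, $d(\process{t}{x}-\process{t}{y})=\tfrac{1}{2}(\grad\log p(\process{t}{x})-\grad\log p(\process{t}{y}))\dt$, so $\tfrac{d}{dt}\twonorm{\process{t}{x}-\process{t}{y}}^{2}\leq -k\twonorm{\process{t}{x}-\process{t}{y}}^{2}$ pathwise and $\twonorm{\process{t}{x}-\process{t}{y}}\leq e^{-kt/2}\twonorm{x-y}$.

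\emph{Stationarity and strong continuity.} Integration by parts gives $\int_{\reals^{d}}\genarg{u}{x}p(x)\dx=\tfrac{1}{2}\int_{\reals^{d}}(\inner{\grad u(x)}{\grad p(x)}+p(x)\inner{\grad}{\grad u(x)})\dx=0$ for all $u\in C_{c}^{\infty}(\reals^{d})$ — equivalently, $p$ solves the stationary Fokker--Planck equation — and since the generator is uniformly elliptic with $C^{1}$ coefficients the martingale problem is well posed and $C_{c}^{\infty}(\reals^{d})$ is a core, so this identity upgrades to invariance of $P$ for the semigroup $\trans{t}{}f(x)\defeq\Earg{f(\process{t}{x})}$ (Echeverr\'ia's theorem). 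For strong continuity, the second-moment bound shows $\trans{t}{}$ maps $L$ into $L$ with $\sup_{t\leq1}\norm{\trans{t}{}}_{L\to L}<\infty$ — the decay $|\trans{t}{}f(x)|(1+\twonorm{x}^{2})^{-1}\to0$ as $\twonorm{x}\to\infty$ coming from the subquadratic growth of $f\in L$, and continuity of $\trans{t}{}f$ from the pathwise contraction plus the uniform integrability furnished by the higher moments. Since $C_{c}^{\infty}(\reals^{d})$ is $\norm{\cdot}_{L}$-dense in $L$ (truncate $f$ past a large ball and mollify), a three-$\eps$ argument reduces the claim to $\norm{\trans{t}{}g-g}_{L}\to0$ for $g\in C_{c}^{\infty}(\reals^{d})$. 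Fix such a $g$ and $R$ exceeding the radius of $\supp{g}$. On $\{\twonorm{x}>R\}$ we have $g(x)=0$, so $|\trans{t}{}g(x)-g(x)|(1+\twonorm{x}^{2})^{-1}\leq\supnorm{g}(1+R^{2})^{-1}$, small uniformly in $t$. On $\{\twonorm{x}\leq R\}$, with $\tau_{M}\defeq\inf\{t:\twonorm{\process{t}{x}}\geq M\}$ the drift on $\{\twonorm{z}\leq M\}$ is bounded by $B_{M}\defeq\sup_{\twonorm{z}\leq M}\twonorm{\grad\log p(z)}$, which gives $\Earg{\twonorm{\process{t\wedge\tau_{M}}{x}-x}^{2}}\leq\tfrac{1}{2}B_{M}^{2}t^{2}+8dt$, while the maximal moment bound gives $\Parg{\tau_{M}\leq t}\leq C(1+R^{2})(M-\twonorm{x^{*}})^{-2}$; together these yield $|\trans{t}{}g(x)-g(x)|\leq M_{1}(g)\sqrt{\tfrac{1}{2}B_{M}^{2}t^{2}+8dt}+2\supnorm{g}\,C(1+R^{2})(M-\twonorm{x^{*}})^{-2}$ uniformly over $\twonorm{x}\leq R$, and choosing $M$ large and then letting $t\to0^{+}$ makes this arbitrarily small. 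Combining the two regimes with the density reduction gives $\norm{\trans{t}{}f-f}_{L}\to0$ for every $f\in L$.

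The step I expect to be the main obstacle is this last one, and specifically the estimate on $\{\twonorm{x}\leq R\}$: because $\log p$ is assumed only $C^{2}$, the drift may grow arbitrarily fast, so $\Earg{\twonorm{\process{t}{x}-x}^{2}}$ has no direct short-time bound; stopping at the exit time $\tau_{M}$ of a large ball is what rescues the argument, and its key virtue is that — after reducing to $g\in C_{c}^{\infty}$ and to compact sets of initial points by density — the resulting estimate is uniform in $x$, which is exactly what $\norm{\cdot}_{L}$-convergence requires.
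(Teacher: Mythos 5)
Your proof is correct, but it is considerably more self-contained than the paper's, which settles all three claims essentially by citation: well-definedness via Khasminskii's Lyapunov test (the paper uses $V(x)=\twonorm{x}^2+1$ with Cauchy--Schwarz to avoid introducing the maximizer $x^*$, you use $V(x)=\twonorm{x-x^*}^2$; same idea), stationarity via Roberts and Tweedie, and strong continuity on $L$ via the argument of \cite[Prop.~15]{GorhamDuVoMa16} together with \cite{Khasminskii11} and \cite{Friedman1975Stoch}. Your stationarity step is genuinely different: integration by parts gives $\int_{\reals^d}\genarg{u}{x}\,p(x)\dx=0$ for $u\in C_c^\infty(\reals^d)$, and Echeverr\'ia's theorem (Ethier--Kurtz, Thm.~4.9.17) upgrades this to invariance; this works because $\generator{u}$ is compactly supported for such $u$ and the martingale problem is well posed, though the well-posedness is best credited to pathwise uniqueness from the locally Lipschitz drift plus your non-explosion bound (via Yamada--Watanabe), not to ``uniform ellipticity with $C^1$ coefficients,'' and no core property of $C_c^\infty$ is actually needed for that theorem. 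Your strong-continuity argument --- uniform boundedness of $\trans{t}{}$ on $L$ for $t\le 1$ from the second-moment estimate, $\norm{\cdot}_L$-density of $C_c^\infty$ by truncation and mollification, and localization at the exit time $\tau_M$ to neutralize the unbounded drift uniformly over compact sets of initial points --- is a valid replacement for the cited argument, and the stopping-time device is indeed the key to the uniformity in $x$ that the $\norm{\cdot}_L$ convergence demands. What the paper's route buys is brevity; what yours buys is a proof readable without consulting the four external references.
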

\begin{proof}
Consider the Lyapunov function $V(x) = \twonorm{x}^2+1$.
The strong log-concavity of $p$, the Cauchy-Schwarz inequality, and the arithmetic-geometric mean inequality imply that
\baligns
	&\genarg{V}{x} 
		= \inner{x}{\grad \log p(x)} + d 
		= \inner{x}{\grad \log p(x)-\grad \log p(0)} + \inner{x}{\grad \log p(0)} + d \\
		&\leq -k\twonorm{x}^2 + \twonorm{x}\twonorm{\grad \log p(0)} + d
		\leq \left(\frac{1}{2}-k\right)\twonorm{x}^2 + \frac{1}{2}\twonorm{\grad \log p(0)}^2 + d  
		\leq k' V(x)
\ealigns
for some constants $k, k' \in \reals$.
Since $\log p$ is locally Lipschitz, 
\cite[Thm.~3.5]{Khasminskii11} implies that the diffusion 
$(\process{t}{x})_{t\geq0}$ is well-defined, %
and \cite[Thm.~2.1]{RobertsTw96} guarantees that $P$ is a stationary distribution.
The argument of \cite[Prop. 15]{GorhamDuVoMa16} with \cite[Thm.~3.5]{Khasminskii11} substituted for \cite[Thm.~3.4]{Khasminskii11} and \cite[Sec. 5, Cor. 1.2]{Friedman1975Stoch} now yields strong continuity.
\end{proof}

\subsection{High-order weighted difference bounds}
A second, technical lemma bounds the growth of weighted smooth function differences in terms of the proximity of function arguments.
The result will be used to characterize the smoothness of $\process{t}{x}$ as a function of the starting point $x$ (\lemref{synchronous}) and, ultimately, to establish the smoothness of $u_h$ (\thmref{concave-stein-factors}).
\begin{lemma}[High-order weighted difference bounds]\label{lem:differences}
Fix any weights $\lambda,\lambda' >0$ and any vectors $x,y,z,w,x',y',z',w'\in\reals^d$.
If $h \in C^2(\reals^d)$, then
\balign
&|\lambda (h(x) - h(y)) - \lambda'(h(x') - h(y')) - \inner{\grad h(y)}{\lambda(x - y) - \lambda'(x' - y')}| \notag\\
 &\leq \texthalf M_2(h) (2\lambda'\twonorm{y-y'}\twonorm{x'-y'}+\lambda\twonorm{x-y}^2 + \lambda'\twonorm{x'-y'}^2).\label{eqn:second-order-diff}
\ealign
Moreover, if $h \in C^3(\reals^d)$, then
\balign
&|\lambda(h(x) - h(y) - (h(z) - h(w))) -\lambda'(h(x') - h(y') - (h(z') - h(w'))) \notag\\
& - \inner{\grad h(z)}{\lambda(x - y - (z - w)) - \lambda'(x' - y' - (z' -w'))}| \label{eqn:third-order-diff}\\
	&\leq M_2(h)\left[\twonorm{y'-x'}\twonorm{\lambda(z - x) - \lambda'(z' - x')}\right. \notag\\
	&+ \lambda'\twonorm{z-z'}\twonorm{x' - y'- (z'-w')}+\lambda\twonorm{z-x}\twonorm{(y-x) - (y'-x')} \notag\\
	&+ \left.\texthalf(\lambda\twonorm{x-y-(z-w)}\twonorm{x-y+z-w}  
	+ \lambda'\twonorm{x'-y'-(z'-w')}\twonorm{x'-y'+z'-w'})\right] \notag\\
   &+ M_3(h) \left[\texthalf \twonorm{y'-x'} (2\lambda'\twonorm{x-x'}\twonorm{z'-x'}+\lambda\twonorm{z-x}^2 +\lambda'\twonorm{z'-x'}^2)\right. \notag\\
	&+ \texthalf(\lambda\twonorm{z-x}\twonorm{y-x}^2 + \lambda'\twonorm{z'-x'}\twonorm{y'-x'}^2) \notag\\
	&+ \left.\textfrac{1}{6}(\lambda\twonorm{w-z}^3+\lambda\twonorm{y-x}^3+\lambda'\twonorm{w'-z'}^3+\lambda'\twonorm{y'-x'}^3)\right]. \notag
\ealign
\end{lemma}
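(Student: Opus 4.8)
Both inequalities come from Taylor expansion combined with three elementary facts: the gradient bound $\twonorm{\grad h(a)-\grad h(b)}\le M_2(h)\twonorm{a-b}$ and Hessian bound $\opnorm{\Hess h(a)-\Hess h(b)}\le M_3(h)\twonorm{a-b}$ (both instances of \eqnref{lipk}), and the remainder estimates $|h(b)-h(a)-\inner{\grad h(a)}{b-a}|\le\frac12 M_2(h)\twonorm{b-a}^2$ and $|h(b)-h(a)-\inner{\grad h(a)}{b-a}-\frac12(b-a)^\top\Hess h(a)(b-a)|\le\frac16 M_3(h)\twonorm{b-a}^3$. For \eqnref{second-order-diff} I would expand each single difference to first order about its second argument, writing $h(x)-h(y)=\inner{\grad h(y)}{x-y}+R$ and $h(x')-h(y')=\inner{\grad h(y')}{x'-y'}+R'$ with $|R|\le\frac12 M_2(h)\twonorm{x-y}^2$, $|R'|\le\frac12 M_2(h)\twonorm{x'-y'}^2$. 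Substituting into the left side, the two $\inner{\grad h(y)}{\cdot}$ terms cancel the subtracted linear form exactly, leaving $\lambda R-\lambda' R'-\lambda'\inner{\grad h(y')-\grad h(y)}{x'-y'}$, and the triangle inequality together with $\twonorm{\grad h(y')-\grad h(y)}\le M_2(h)\twonorm{y-y'}$ produces \eqnref{second-order-diff} with the constant $\frac12$.

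For \eqnref{third-order-diff} I would run the same scheme one derivative higher. Expand $h(y)$ about $x$ and $h(w)$ about $z$ (and likewise for the primed points) to second order, so $h(x)-h(y)=-\inner{\grad h(x)}{y-x}-\frac12(y-x)^\top\Hess h(x)(y-x)-\theta_{xy}$ with $|\theta_{xy}|\le\frac16 M_3(h)\twonorm{y-x}^3$, and so on. Write $E$ for the expression inside the absolute value in \eqnref{third-order-diff}, put $D\defeq h(x)-h(y)-(h(z)-h(w))$ (with $D'$ its primed analogue), $\delta\defeq(x-y)-(z-w)=-(y-x)+(w-z)$, and use the algebraic identity $E=\lambda\bigl[D-\inner{\grad h(z)}{\delta}\bigr]-\lambda'\bigl[D'-\inner{\grad h(z')}{\delta'}\bigr]-\lambda'\inner{\grad h(z')-\grad h(z)}{\delta'}$. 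The last summand is immediately at most $\lambda' M_2(h)\twonorm{z-z'}\twonorm{x'-y'-(z'-w')}$. Substituting the Taylor expansions into $D-\inner{\grad h(z)}{\delta}$ and cancelling the $\inner{\grad h(z)}{\cdot}$ terms leaves exactly $\inner{\grad h(z)-\grad h(x)}{y-x}-\frac12(y-x)^\top\Hess h(x)(y-x)+\frac12(w-z)^\top\Hess h(z)(w-z)+(\theta_{zw}-\theta_{xy})$; the $\theta$'s from the primed and unprimed halves then assemble into the isolated block $\frac16 M_3(h)(\lambda\twonorm{w-z}^3+\lambda\twonorm{y-x}^3+\lambda'\twonorm{w'-z'}^3+\lambda'\twonorm{y'-x'}^3)$.

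The first-order cross term $\lambda\inner{\grad h(z)-\grad h(x)}{y-x}-\lambda'\inner{\grad h(z')-\grad h(x')}{y'-x'}$ I would split by adding and subtracting: the piece $\lambda\inner{\grad h(z)-\grad h(x)}{(y-x)-(y'-x')}$ is at most $\lambda M_2(h)\twonorm{z-x}\twonorm{(y-x)-(y'-x')}$; in the remainder $\inner{\lambda(\grad h(z)-\grad h(x))-\lambda'(\grad h(z')-\grad h(x'))}{y'-x'}$, subtracting $\Hess h(x)[\lambda(z-x)-\lambda'(z'-x')]$ peels off $\inner{\Hess h(x)[\lambda(z-x)-\lambda'(z'-x')]}{y'-x'}$, which is at most $M_2(h)\twonorm{y'-x'}\twonorm{\lambda(z-x)-\lambda'(z'-x')}$, and the residue is precisely \eqnref{second-order-diff} applied to the $C^2$ function $p\mapsto\inner{\grad h(p)}{y'-x'}$ with second arguments $x,x'$ and first arguments $z,z'$ — whose $M_2$ is at most $M_3(h)\twonorm{y'-x'}$ by \eqnref{lipk} — yielding $\frac12 M_3(h)\twonorm{y'-x'}(2\lambda'\twonorm{x-x'}\twonorm{z'-x'}+\lambda\twonorm{z-x}^2+\lambda'\twonorm{z'-x'}^2)$. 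Finally, in the Hessian part $-\frac12(y-x)^\top\Hess h(x)(y-x)+\frac12(w-z)^\top\Hess h(z)(w-z)$ I would replace $\Hess h(x)$ by $\Hess h(z)$ in the first term at cost $\frac12 M_3(h)\twonorm{x-z}\twonorm{y-x}^2$, then combine the two equal-Hessian quadratic forms via $a^\top Ha-b^\top Hb=(a+b)^\top H(a-b)$ with $a=w-z$, $b=y-x$, noting $a-b=\delta$ and $\twonorm{a+b}=\twonorm{x-y+z-w}$, to obtain $\frac12 M_2(h)\twonorm{x-y-(z-w)}\twonorm{x-y+z-w}$; the primed halves of these last two steps are identical with primes attached, and summing all contributions gives \eqnref{third-order-diff}.

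The proof is conceptually routine; the only real obstacle is organizational. Because the bound intertwines primed and unprimed data, one cannot estimate the $\lambda$ and $\lambda'$ contributions separately — every add-and-subtract must be arranged so that a leading term generated on the unprimed side has a primed partner to cancel against before any norm is taken, otherwise one manufactures spurious mixed quantities such as $\twonorm{y'-z}$. Choosing the base points ($x$ for the $x$–$y$ pair, $z$ for the $z$–$w$ pair) and the auxiliary direction ($y'-x'$) so that the surviving cross terms land on exactly the right-hand side, and verifying that the accumulated constants collapse to $\frac16$, $\frac12$, and $1$, is where essentially all the effort lies; the one ingredient beyond bookkeeping is the observation that \eqnref{lipk} makes $p\mapsto\inner{\grad h(p)}{v}$ a $C^2$ function with $M_2\le M_3(h)\twonorm{v}$, which legitimizes reusing \eqnref{second-order-diff} inside the proof of \eqnref{third-order-diff}.
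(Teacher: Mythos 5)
Your proposal is correct and follows essentially the same route as the paper: Taylor expansion about the base points $y,y'$ (respectively $x,z,x',z'$), the same splitting of the first-order cross terms, the same reuse of \eqref{eqn:second-order-diff} on the auxiliary $C^2$ function $p\mapsto\inner{\grad h(p)}{y'-x'}$ with $M_2\leq M_3(h)\twonorm{y'-x'}$, and the same Hessian-difference manipulation, so every term on the right-hand side arises exactly as in the paper's argument. The only cosmetic difference is that the paper writes the remainders in mean-value form ($\grad^3 h(\zeta)[\cdot,\cdot,\cdot]/6$) while you use the equivalent remainder estimates directly.
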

\begin{proof}
To establish the second-order difference bound \eqref{eqn:second-order-diff}, we first apply Taylor's theorem with mean-value remainder
to $h(x) - h(y)$ and $h(x') - h(y')$ to obtain
\baligns
&\lambda(h(x) - h(y)) - \lambda'(h(x') - h(y')) - \inner{\grad h(y)}{\lambda(x-y)-\lambda'(x'-y')} \\ 
=\ &\lambda'\inner{\grad h(y)-\grad h(y')}{x'-y'} + (\lambda\inner{\Hess h(\zeta)(x-y)}{x-y} - \lambda'\inner{\Hess h(\zeta')(x'-y')}{x'-y'})/2 
\ealigns
for some $\zeta,\zeta'\in \reals^d$. %
Cauchy-Schwarz, the definition of the operator norm, and the Lipschitz gradient relation \eqref{eqn:lipk} now yield
the advertised conclusion \eqnref{second-order-diff}.

To derive the third-order difference bound \eqref{eqn:third-order-diff}, we apply Taylor's theorem with mean-value remainder
to $h(w) - h(z)$, $h(y) - h(x)$, $h(w') - h(z')$, and $h(y') - h(x')$ to write
\balign
&|\lambda(h(x) - h(y) - (h(z) - h(w))) -\lambda'(h(x') - h(y') - (h(z') - h(w'))) \notag\\
& -\inner{\grad h(z)}{\lambda(x - y - (z - w)) - \lambda'(x' - y' - (z' -w'))}| \label{eqn:to-bound} \\
  &= |\lambda'\inner{\grad h(z)-\grad h(z')}{x' - y' -(z'-w')} + \lambda\inner{\grad h(z)-\grad h(x)}{(y-x) - (y'-x')} \notag\\
	&	+\inner{\lambda(\grad h(z)-\grad h(x))- \lambda'(\grad h(z')-\grad h(x'))}{y'-x'} \notag\\
	&+ \lambda\inner{\Hess h(z)(w-z)}{w-z}/2 - \lambda\inner{\Hess h(x)(y-x)}{y-x}/2  \notag\\
	&- \lambda'\inner{\Hess h(z')(w'-z')}{w'-z'}/2 + \lambda'\inner{\Hess h(x')(y'-x')}{y'-x'}/2 \notag\\
	&+ \lambda\grad^3 h(\zeta'')[w-z,w-z,w-z]/6- \lambda\grad^3 h(\zeta'''')[y-x,y-x,y-x]/6 \notag\\
	&-\lambda'\grad^3 h(\zeta''')[w'-z',w'-z',w'-z']/6+\lambda'\grad^3 h(\zeta''''')[y'-x',y'-x',y'-x']/6 \notag|
\ealign
for some $\zeta'',\zeta''',\zeta'''',\zeta'''''\in\reals^d$.
We will bound each line in this expression in turn. %
First we see, by Cauchy-Schwarz and the Lipschitz property \eqref{eqn:lipk}, that
\begin{gather*}
|\lambda'\inner{\grad h(z)-\grad h(z')}{x' - y' -(z'-w')} + \lambda\inner{\grad h(z)-\grad h(x)}{(y-x) - (y'-x')}|\\
	\leq M_2(h)(\lambda'\twonorm{z-z'}\twonorm{x' - y'- (z'-w')}+\lambda\twonorm{z-x}\twonorm{(y-x) - (y'-x')}).
\end{gather*}
Next, we invoke our second-order difference bound \eqref{eqn:second-order-diff} on the $C^2(\reals^d)$ function $x\mapsto \inner{\grad h(x)}{y'-x'}$, 
apply the Cauchy-Schwarz inequality, and use the definition of the operator norm to conclude that
\baligns
&|\inner{\lambda(\grad h(z)-\grad h(x))- \lambda'(\grad h(z')-\grad h(x'))}{y'-x'}| \\
	&\leq M_2(h)\twonorm{y'-x'}\twonorm{\lambda(z - x) - \lambda'(z' - x')} \\
   &+ \frac{1}{2}M_3(h) \twonorm{y'-x'} (2\lambda'\twonorm{x-x'}\twonorm{z'-x'}+\lambda\twonorm{z-x}^2 +\lambda'\twonorm{z'-x'}^2).
\ealigns
To bound the subsequent line, we note that Cauchy-Schwarz, the definition of the operator norm, and the Lipschitz property \eqref{eqn:lipk} imply that
\baligns
&|\inner{\Hess h(z)(w-z)}{w-z} - \inner{\Hess h(x)(y-x)}{y-x}| \\
	&=|\inner{\Hess h(z)(w-z+y-x)}{x-y-(z-w)} +\inner{(\Hess h(z) - \Hess h(x))(y-x)}{y-x}| \\
	&\leq M_2(h) \twonorm{x-y-(z-w)}\twonorm{x-y+z-w} 
	+M_3(h)\twonorm{z-x}\twonorm{y-x}^2.
\ealigns
Similarly, 
\baligns
&|\inner{\Hess h(z')(w'-z')}{w'-z'} - \inner{\Hess h(x')(y'-x')}{y'-x'}| \\
	&\leq M_2(h) \twonorm{x'-y'-(z'-w')}\twonorm{x'-y'+z'-w'} 
	+M_3(h)\twonorm{z'-x'}\twonorm{y'-x'}^2.
\ealigns
Finally, Cauchy-Schwarz and the definition of the operator norm give
\baligns
&|\lambda\grad^3 h(\zeta'')[w-z,w-z,w-z] - \lambda\grad^3 h(\zeta'''')[y-x,y-x,y-x] \\
	&-\lambda'\grad^3 h(\zeta''')[w'-z',w'-z',w'-z'] +\lambda'\grad^3 h(\zeta''''')[y'-x',y'-x',y'-x']| \\
	&\leq M_3(h)(\lambda\twonorm{w-z}^3+\lambda\twonorm{y-x}^3+\lambda'\twonorm{w'-z'}^3+\lambda'\twonorm{y'-x'}^3).
\ealigns
Bounding the third-order difference \eqref{eqn:to-bound} in terms of these four estimates yields \eqref{eqn:third-order-diff}.
\end{proof}

\subsection{Synchronous coupling lemma}
Our proof of \thmref{concave-stein-factors} additionally rests upon a series of coupling inequalities which serve to characterize the smoothness of $\process{t}{x}$ as a function of $x$.
The couplings espoused in the lemma to follow are termed \emph{synchronous}, because the same Brownian motion is used to drive each process.

\begin{lemma}[Synchronous coupling inequalities] \label{lem:synchronous}
Suppose that $\log p \in C^4(\reals^d)$ is $k$-strongly concave with $M_3(\log p)\leq L_3$ and $M_4(\log p) \leq L_4$.
Fix a $d$-dimensional Wiener process $(W_t)_{t\geq 0}$, 
any vectors $x,x',v,v' \in \reals^d$ with $\twonorm{v}=\twonorm{v'}=1$, and any weights $\eps, \eps', \eps'' > 0$, and
define the growth factors
\balign\label{eqn:growth-factors}
f_1(x,x',\eps,\eps',\eps'') &\defeq \twonorm{x-x'}+(\eps''+\eps')/2+\eps(3+\eps/\eps''+\eps/\eps'+\twonorm{x-x'}/\eps')/3 \qtext{and}\notag\\
f_2(x,x',\eps,\eps',\eps'') &\defeq \twonorm{x-x'}+3(\eps''+\eps')/2+\eps(3+\eps/\eps''+\eps/\eps')/3. 
\ealign

For each starting point of the form $z+b'v' +b v$ with $z\in\{x,x'\}$, $b' \in \{0,\eps',\eps''\}$, and $b\in\{0,\eps\}$,
consider an overdamped Langevin diffusion $(\process{t}{z+b'v' +b v})_{t\geq0}$ solving the stochastic differential equation
\balign\label{eqn:synchronous-sde}
	d\process{t}{z+b'v'+bv} &= \frac{1}{2}\grad\log p(\process{t}{z+b'v'+bv}) dt + dW_t\qtext{with} \process{0}{z+b'v'+bv}=z+b'v'+bv, 
\ealign
and define the differenced processes
\baligns
V_t &\defeq (\process{t}{x'+\eps'' v'} - \process{t}{x'})/\eps'' - (\process{t}{x+\eps' v'} - \process{t}{x})/\eps' \qtext{and}\\
U_t 
	&\defeq {\process{t}{x'+\eps'' v'+\eps v} - \process{t}{x'+\eps''v'} -(\process{t}{x'+\eps v} - \process{t}{x'})}{/\eps\eps''} \\
	&-{\process{t}{x+\eps' v'+\eps v} - \process{t}{x+\eps'v'} -(\process{t}{x+\eps v} - \process{t}{x})}{/\eps\eps'}.
\ealigns
These coupled processes almost surely satisfy the synchronous coupling bounds,
\balign
	e^{kt/2}&\twonorm{\process{t}{x+\eps v} - \process{t}{x}} 
		\leq \eps,\label{eqn:contract1} \\
	e^{kt/2}&\twonorm{V_t}
		\leq \frac{L_3}{k}(\twonorm{x-x'}+(\eps''+\eps')/2),\qtext{and}\label{eqn:contract2} \\
	e^{kt/2}&\twonorm{U_t} 
		\leq \frac{3L_3^2}{k^2}f_1(x,x',\eps,\eps',\eps'')
		+\frac{L_4}{2k}f_2(x,x',\eps,\eps',\eps''), \label{eqn:contract3}
\ealign
the second-order differenced function bound,
\balign \label{eqn:function-contract2}
	&(h_2(\process{t}{x'+\eps''v'}) - h_2(\process{t}{x'}))/\eps'' - (h_2(\process{t}{x+\eps' v'}) - h_2(\process{t}{x}))/\eps' \\
		\leq &\left(M_1(h_2)\frac{L_3}{k}e^{-kt/2} +  M_2(h_2) e^{-kt}\right)(\twonorm{x-x'}+(\eps''+\eps')/2),\notag
\ealign
and the third-order differenced function bound,
\balign \label{eqn:function-contract3}
	&{(h_3(\process{t}{x'+\eps'' v'+\eps v}) - h_3(\process{t}{x'+\eps''v'}) -(h_3(\process{t}{x'+\eps v}) - h_3(\process{t}{x'})))}{/(\eps\eps'')} \notag\\
		&-{(h_3(\process{t}{x+\eps' v'+\eps v}) - h_3(\process{t}{x+\eps'v'}) -(h_3(\process{t}{x+\eps v}) - h_3(\process{t}{x})))}{/(\eps\eps')} \\
	\leq &\,\left(M_1(h_3)\frac{3L_3^2}{k^2}e^{-kt/2} +  M_2(h_3) \frac{3L_3}{k}e^{-kt}\right) f_1(x,x',\eps,\eps',\eps'') \notag\\
	+ &\left(M_1(h_3)\frac{L_4}{2k}e^{-kt/2} + M_3(h_3) e^{-3kt/2}\right) f_2(x,x',\eps,\eps',\eps'')\notag%
\ealign
for each $t\geq 0$, $h_2 \in C^2(\reals^d)$, and $h_3 \in C^3(\reals^d)$.
\end{lemma}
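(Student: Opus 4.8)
The plan rests on the single feature that distinguishes synchronous coupling: it annihilates the martingale part of every difference of the coupled diffusions. For any two starting points $a,b$ (all eight processes being well-defined for $t\geq0$ by \lemref{langevin-properties} and realizable jointly, with continuous paths, on the common Wiener space), the path $t\mapsto\process{t}{a}-\process{t}{b}$ solves $\frac{d}{dt}(\process{t}{a}-\process{t}{b})=\frac12(\grad\log p(\process{t}{a})-\grad\log p(\process{t}{b}))$ with no Brownian term, hence is almost surely $C^1$ in $t$; by linearity the same holds for $V_t$, $U_t$, and every mixed difference built from the eight trajectories. I would organize all the bounds around one master estimate: whenever a differenced quantity $D_t$ has derivative of the form $\dot D_t=\frac12\bar H_t D_t+\frac12 r_t$, with $\bar H_t$ a mixture of Hessians of $\log p$ (so $\bar H_t\psdle -kI$ by $k$-strong concavity) and $r_t$ a remainder vector, then $\frac{d}{dt}\twonorm{D_t}^2=\inner{D_t}{\bar H_t D_t}+\inner{D_t}{r_t}\le-k\twonorm{D_t}^2+\twonorm{D_t}\twonorm{r_t}$, so $\frac{d}{dt}(e^{kt/2}\twonorm{D_t})\le\frac12 e^{kt/2}\twonorm{r_t}$, and integrating from $0$ yields $e^{kt/2}\twonorm{D_t}\le\twonorm{D_0}+\frac12\int_0^t e^{ks/2}\twonorm{r_s}\ds$. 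Each bound then reduces to identifying $\bar H_t$ and estimating $\twonorm{r_t}$ with the help of the bounds already proved.

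First I would treat \eqref{eqn:contract1}, the base case, where $r_t\equiv0$ and $\twonorm{D_0}=\eps$; the identical computation with arbitrary endpoints gives the crude contraction $\twonorm{\process{t}{a}-\process{t}{b}}\le e^{-kt/2}\twonorm{a-b}$, which I will use freely. For \eqref{eqn:contract2}, writing $V_t=\delta_t/\eps''-\delta_t'/\eps'$ with $\delta_t=\process{t}{x'+\eps''v'}-\process{t}{x'}$ and $\delta_t'=\process{t}{x+\eps'v'}-\process{t}{x}$, I would represent each gradient difference as an averaged Hessian applied to $\delta_t$ or $\delta_t'$ (say $\bar H_t,\bar H_t'$), use the identity $\bar H_t\delta_t/\eps''-\bar H_t'\delta_t'/\eps'=\bar H_t'V_t+(\bar H_t-\bar H_t')\delta_t/\eps''$ to peel off the contracting piece and set $r_t=(\bar H_t-\bar H_t')\delta_t/\eps''$, noting $V_0=v'-v'=0$. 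Since $M_3(\log p)\le L_3$ makes $\Hess\log p$ $L_3$-Lipschitz (\eqref{eqn:lipk}), $\twonorm{\bar H_t-\bar H_t'}$ is at most $L_3$ times an average of distances between corresponding points of the two Hessian-averaging segments, which the crude contraction bounds by $L_3 e^{-kt/2}(\twonorm{x-x'}+(\eps''+\eps')/2)$; with $\twonorm{\delta_t/\eps''}\le e^{-kt/2}$ from \eqref{eqn:contract1} and $\int_0^t e^{-ks/2}\ds\le2/k$ the master estimate gives exactly \eqref{eqn:contract2}. Before going further I would record that the mixed differences $A_t\defeq(\process{t}{x'+\eps''v'+\eps v}-\process{t}{x'+\eps''v'})/\eps-(\process{t}{x'+\eps v}-\process{t}{x'})/\eps$ and its $x$-analogue $B_t$ are themselves of the exact form of the left-hand side of \eqref{eqn:contract2} (base points $\process{t}{x'+\eps''v'},\process{t}{x'}$, both increments $\eps v$), so \eqref{eqn:contract2} already yields $\twonorm{A_t}\le\frac{L_3}{k}(\eps''+\eps)e^{-kt/2}$ and $\twonorm{B_t}\le\frac{L_3}{k}(\eps'+\eps)e^{-kt/2}$.

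The heart of the matter, and the step I expect to be the main obstacle, is \eqref{eqn:contract3}. With $D^{(z,b')}_t\defeq(\process{t}{z+b'v'+\eps v}-\process{t}{z+b'v'})/\eps$ one has $U_t=(D^{(x',\eps'')}_t-D^{(x',0)}_t)/\eps''-(D^{(x,\eps')}_t-D^{(x,0)}_t)/\eps'$ and $\dot D^{(z,b')}_t=\frac12\hat H^{(z,b')}_tD^{(z,b')}_t$ with $\hat H^{(z,b')}_t\psdle -kI$; peeling off $\hat H^{(x,0)}_tU_t$ as in the second-order case leaves $r_t$ equal to $(\hat H^{(x',0)}_t-\hat H^{(x,0)}_t)A_t/\eps''$ plus $\frac{\hat H^{(x',\eps'')}_t-\hat H^{(x',0)}_t}{\eps''}D^{(x',\eps'')}_t-\frac{\hat H^{(x,\eps')}_t-\hat H^{(x,0)}_t}{\eps'}D^{(x,\eps')}_t$. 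The subtlety is that each summand of this last difference is only $O(1)$ in the step it is divided by, so they must be recombined to expose a cancellation — the whole $r_t$ must vanish as $\eps,\eps',\eps'',\twonorm{x-x'}\to0$. I would split off $\frac{\hat H^{(x,\eps')}_t-\hat H^{(x,0)}_t}{\eps'}(D^{(x',\eps'')}_t-D^{(x,\eps')}_t)$, the bracket being once more of the form of the left-hand side of \eqref{eqn:contract2}, and expand the surviving matrix factor $\frac{\hat H^{(x',\eps'')}_t-\hat H^{(x',0)}_t}{\eps''}-\frac{\hat H^{(x,\eps')}_t-\hat H^{(x,0)}_t}{\eps'}$ by an integral Taylor formula, so that $\Hess\log p$-differences turn into $\grad^3\log p$ (operator norm $\le L_3$) applied to offset increments, and the difference of those between the $x'$- and $x$-configurations splits into a piece controlled by $M_4(\log p)\le L_4$ and a piece equal to $L_3$ times yet another instance of \eqref{eqn:contract2}. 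Bounding every distance that appears via the crude contraction, \eqref{eqn:contract1} and \eqref{eqn:contract2}, I expect $\twonorm{r_t}\le\frac{6L_3^2}{k}e^{-kt}f_1(x,x',\eps,\eps',\eps'')+\frac{L_4}{k}e^{-3kt/2}f_2(x,x',\eps,\eps',\eps'')$ with $f_1,f_2$ as in \eqref{eqn:growth-factors} — the $\eps^2/\eps''$ and $\eps^2/\eps'$ entries of $f_1,f_2$ being exactly what arises when a quantity that is second order in $\eps$ gets divided by $\eps''$ or $\eps'$ — after which the master estimate with $U_0=0$, $\int_0^t e^{-ks/2}\ds\le2/k$ and $\int_0^t e^{-ks}\ds\le1/k$ delivers \eqref{eqn:contract3}. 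The work here is almost entirely bookkeeping: systematically rewriting each higher-order finite difference of $\grad\log p$ or $\Hess\log p$ as an averaged derivative applied to the matching lower-order difference plus a controllable remainder, and tracking which triangle inequality feeds which term of $f_1,f_2$.

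Finally, the two differenced-function inequalities should drop out of \lemref{differences} together with the contraction bounds just established. For \eqref{eqn:function-contract2} I would invoke the second-order difference bound \eqref{eqn:second-order-diff} with $\lambda=1/\eps''$, $\lambda'=1/\eps'$ and the trajectories $\process{t}{x'+\eps''v'},\process{t}{x'},\process{t}{x+\eps'v'},\process{t}{x}$ in the roles of $x,y,x',y'$: its linear term is $\inner{\grad h_2(\process{t}{x'})}{V_t}$, which is at most $M_1(h_2)\twonorm{V_t}$ in modulus and hence at most $M_1(h_2)\frac{L_3}{k}e^{-kt/2}(\twonorm{x-x'}+(\eps''+\eps')/2)$ by \eqref{eqn:contract2}, while its quadratic remainder, whose arguments are pairwise distances among those four trajectories, collapses under the crude contraction to $M_2(h_2)e^{-kt}(\twonorm{x-x'}+(\eps''+\eps')/2)$. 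For \eqref{eqn:function-contract3} I would likewise invoke the third-order difference bound \eqref{eqn:third-order-diff} with $\lambda=1/(\eps\eps'')$, $\lambda'=1/(\eps\eps')$ and the eight coupled trajectories: its linear term $\inner{\grad h_3(\process{t}{x'+\eps v})}{U_t}$ is bounded through \eqref{eqn:contract3}, its $M_2(h_3)$-weighted remainder reduces — via \eqref{eqn:contract1} and \eqref{eqn:contract2} applied to the single-difference and $V$-type quantities occurring in it — to $\frac{3L_3}{k}e^{-kt}f_1$, and its $M_3(h_3)$-weighted remainder, which contains the cubic quantities $\lambda\twonorm{y-x}^3$, $\lambda'\twonorm{w'-z'}^3$ and their analogues (equal to $\eps^2 e^{-3kt/2}/\eps''$, $\eps^2 e^{-3kt/2}/\eps'$, etc.), reduces to $e^{-3kt/2}f_2$. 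Summing these contributions yields \eqref{eqn:function-contract3}, completing the lemma.
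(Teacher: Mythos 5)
Your overall architecture is the paper's own: synchronous coupling cancels the Brownian terms, so every difference of coupled trajectories solves a deterministic ODE; $k$-strong concavity supplies the contraction; a \Gronwall-type master estimate (the paper implements it via \Ito's lemma applied to $(t,w)\mapsto e^{kt/2}\twonorm{w}$) converts a remainder bound into \eqref{eqn:contract1}--\eqref{eqn:contract3}; and the differenced-function bounds follow from \lemref{differences} combined with the coupling bounds. Your treatment of \eqref{eqn:contract1}, \eqref{eqn:contract2}, \eqref{eqn:function-contract2}, and (in outline) \eqref{eqn:function-contract3} is sound, including the useful observation that $A_t$ and $B_t$ are themselves $V$-type quantities to which \eqref{eqn:contract2} applies with renamed parameters.

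The gap is in \eqref{eqn:contract3}, precisely the step you flag as the main obstacle, and it is left unestablished. First, your announced remainder estimate $\twonorm{r_t}\le\frac{6L_3^2}{k}e^{-kt}f_1+\frac{L_4}{k}e^{-3kt/2}f_2$ does not deliver the stated bound: inserted into your own master estimate it yields $e^{kt/2}\twonorm{U_t}\le\frac{6L_3^2}{k^2}f_1+\frac{L_4}{2k^2}f_2$, i.e.\ twice the stated $L_3^2$ coefficient and the wrong power of $k$ on the $L_4$ term (to end at $\frac{L_4}{2k}f_2$ the remainder must carry $L_4e^{-3kt/2}f_2$ with no $1/k$, since the only factor $1/k$ available there comes from the time integral). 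Second, the specific peeling you propose (contracting through $\hat H^{(x,0)}_t$ and splitting off $\frac{\hat H^{(x,\eps')}_t-\hat H^{(x,0)}_t}{\eps'}\bigl(D^{(x',\eps'')}_t-D^{(x,\eps')}_t\bigr)$) generates cross terms such as $\eps\twonorm{x-x'}/\eps''$ and $\eps\eps''/\eps'$, which do not occur in $f_1$ or $f_2$, so it is not shown that your bookkeeping lands on the advertised growth factors at all. The paper sidesteps this hand recombination by applying the third-order weighted difference bound \eqref{eqn:third-order-diff} of \lemref{differences} directly to the drift of $U_s$, i.e.\ to $h(z)=\inner{U_s}{\grad\log p(z)}$ with $M_2(h)\le L_3\twonorm{U_s}$ and $M_3(h)\le L_4\twonorm{U_s}$, expanding around $\process{s}{x'+\eps''v'}$ and inserting \eqref{eqn:contract1}--\eqref{eqn:contract2}; this gives $\inner{U_s}{2\dot U_s}\le -k\twonorm{U_s}^2+\twonorm{U_s}\left(\frac{3L_3^2}{k}e^{-ks}f_1+L_4e^{-3ks/2}f_2\right)$, which integrates to \eqref{eqn:contract3} with exactly the stated constants and $f_1,f_2$ (and then feeds the constants of \eqref{eqn:function-contract3}). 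Until you either carry out your recombination so that it produces these constants and growth factors, or route the drift estimate through \lemref{differences} in this way, \eqref{eqn:contract3} and hence \eqref{eqn:function-contract3} remain unproved.
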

\begin{proof}
By \lemref{langevin-properties}, each process $(\process{t}{z+b'v' +b v})_{t\geq0}$ with $z\in\{x,x'\}$, $b' \in \{0,\eps',\eps''\}$, and $b\in\{0,\eps\}$ is well-defined for all times $t \in[0,\infty)$.
The first-order bound \eqref{eqn:contract1} is well known, and a concise proof can be found in \cite{CattiauxGu14}.
\paragraph{Second-order bounds}
To establish the second conclusion \eqref{eqn:contract2}, we consider the \Ito process of second-order differences
\baligns
V_t 
	&= \frac{1}{2}\int_0^t \frac{\grad\log p(\process{s}{x'+\eps'' v'}) - \grad\log p(\process{s}{x'})}{\eps''} - \frac{\grad\log p(\process{s}{x+\eps' v'}) - \grad\log p(\process{s}{x})}{\eps'}\, ds 
\ealigns
and apply \Ito's lemma to the mapping $(t,w)\mapsto e^{kt/2}\twonorm{w}$.  This yields
\baligns
&e^{kt/2}\twonorm{V_t} 
	= e^{0}\twonorm{V_0} + \int_0^t ke^{ks}\twonorm{V_s} + e^{ks}\deriv{}{s}\twonorm{V_s}\, ds \\
	&= \int_0^t \frac{e^{ks/2}}{2\twonorm{V_s}} (k\twonorm{V_s}^2 \\
	&+ \binner{V_s}{(\grad\log p(\process{s}{x'+\eps'' v'}) - \grad\log p(\process{s}{x'}))/\eps'' - (\grad\log p(\process{s}{x+\eps' v'}) - \grad\log p(\process{s}{x}))/\eps'}) ds.
\ealigns
Fix a value $s\in[0,t]$. 
For any $h_2\in C^2(\reals^d)$, the \lemref{differences} second-order difference inequality \eqref{eqn:second-order-diff},
the first order coupling bound \eqref{eqn:contract1}, Cauchy-Schwarz, and the Lipschitz identity \eqref{eqn:lipk} together give the estimates
\balign
&(h_2(\process{s}{x'+\eps''v'}) - h_2(\process{s}{x'}))/\eps'' - (h_2(\process{s}{x+\eps' v'}) - h_2(\process{s}{x}))/\eps' \notag \\
	\leq\ &\inner{\grad h_2(\process{s}{x'})}{V_s} 
	+\frac{1}{2} M_2(h_2) (2\twonorm{\process{s}{x'} - \process{s}{x}}\twonorm{\process{s}{x+\eps' v'}-\process{s}{x}}/\eps'\notag\\
	&+\twonorm{\process{s}{x'+\eps'' v'}-\process{s}{x'}}^2/\eps''
	+\twonorm{\process{s}{x+\eps' v'}-\process{s}{x}}^2/\eps') \notag\\
	\leq\ &\inner{\grad h_2(\process{s}{x'})}{V_s}	+  M_2(h_2) e^{-ks}(\twonorm{x-x'}+(\eps''+\eps')/2)\label{eqn:pre-contract2} \\
	\leq\ &M_1(h_2)\twonorm{V_s}	+  M_2(h_2) e^{-ks}(\twonorm{x-x'}+(\eps''+\eps')/2). \label{eqn:pre-function-contract2}
\ealign
Applying the estimate \eqref{eqn:pre-contract2} to the $C^2(\reals^d)$ function $h_2(z)=\inner{V_s}{\grad\log p(z)}$ with
$
 M_2(h_2)  = \sup_{z\in\reals^d}\opnorm{\grad^3\log p(z)[V_s]} \leq L_3 \twonorm{V_s},
$ yields
\baligns
&\inner{V_s}{(\grad\log p(\process{s}{x'+\eps'' v'}) - \grad\log p(\process{s}{x'}))/\eps'' - (\grad\log p(\process{s}{x+\eps' v'}) - \grad\log p(\process{s}{x}))/\eps'} \\
	\leq\ &\inner{V_s}{\Hess \log p(\process{s}{x'})V_s} 
		+ L_3\twonorm{V_s}e^{-ks}(\twonorm{x-x'}+(\eps''+\eps')/2) \\
	\leq &-k\twonorm{V_s}^2
		+ L_3\twonorm{V_s}e^{-ks}(\twonorm{x-x'}+(\eps''+\eps')/2),
\ealigns
where, to achieve the second inequality, we used the $k$-strong log-concavity of $p$.
Now we may derive the second-order synchronous coupling bound \eqref{eqn:contract2}, since
\baligns
e^{kt/2}\twonorm{V_t} 
	&\leq \frac{L_3}{2}(\twonorm{x-x'}+(\eps''+\eps')/2)\int_0^t e^{-ks/2} ds 
	\leq \frac{L_3}{k}(\twonorm{x-x'}+(\eps''+\eps')/2).
\ealigns
Applying the synchronous coupling bound \eqref{eqn:contract2} to the estimate \eqref{eqn:pre-function-contract2} finally delivers the second-order differenced function bound
 \eqref{eqn:function-contract2}.
\paragraph{Third-order bounds}
To establish the third conclusion \eqref{eqn:contract3}, we consider the \Ito process of third-order differences 
\baligns
U_t 
	= \frac{1}{2}\int_0^t&\frac{\grad\log p(\process{s}{x'+\eps'' v'+\eps v}) - \grad\log p(\process{s}{x'+\eps''v'}) 
		-(\grad\log p(\process{s}{x'+\eps v})-\grad\log p(\process{s}{x'}))}{\eps\eps''} \\
	- &\frac{\grad\log p(\process{s}{x+\eps' v'+\eps v}) - \grad\log p(\process{s}{x+\eps'v'}) 
		-(\grad\log p(\process{s}{x+\eps v})-\grad\log p(\process{s}{x}))}{\eps\eps'}\ ds
\ealigns
and invoke \Ito's lemma once more for the mapping $(t,w)\mapsto e^{kt/2}\twonorm{w}$.
This produces
\baligns
&e^{kt/2}\twonorm{U_t}
= e^{0}\twonorm{U_0} + \int_0^t ke^{ks}\twonorm{U_s} + e^{ks}\deriv{}{s}\twonorm{U_s}\, ds \\
&= \int_0^t \frac{e^{ks/2}}{2\twonorm{U_s}}\big(k\twonorm{U_s}^2 \\
&+ \textfrac{1}{\eps\eps''}\binner{U_s}{\grad\log p(\process{s}{x'+\eps'' v'+\eps v}) - \grad\log p(\process{s}{x'+\eps''v'}) 
		-(\grad\log p(\process{s}{x'+\eps v})-\grad\log p(\process{s}{x'}))} \\
&- \textfrac{1}{\eps\eps'}\binner{U_s}{\grad\log p(\process{s}{x+\eps' v'+\eps v}) - \grad\log p(\process{s}{x+\eps'v'}) 
		-(\grad\log p(\process{s}{x+\eps v})-\grad\log p(\process{s}{x}))}\big) ds.
\ealigns
Fix a value $s\in[0,t]$, and introduce the shorthand $c_1 \defeq f_1(x,x',\eps,\eps',\eps'')$ and $c_2 \defeq f_2(x,x',\eps,\eps',\eps'')$.
For any $h_3\in C^3(\reals^d)$, the \lemref{differences} third-order difference inequality \eqref{eqn:third-order-diff}, the coupling bounds  \eqref{eqn:contract1} and \eqref{eqn:contract2}, 
Cauchy-Schwarz, and the Lipschitz identity \eqref{eqn:lipk} together imply the estimates 
\balign
	&{(h_3(\process{s}{x'+\eps'' v'+\eps v}) - h_3(\process{s}{x'+\eps''v'}) -(h_3(\process{s}{x'+\eps v}) - h_3(\process{s}{x'})))}{/(\eps\eps'')} \notag\\
		&-{(h_3(\process{s}{x+\eps' v'+\eps v}) - h_3(\process{s}{x+\eps'v'}) -(h_3(\process{s}{x+\eps v}) - h_3(\process{s}{x})))}{/(\eps\eps')} \notag\\
	\leq &\,\inner{\grad h_3(\process{s}{x'+\eps''v'})}{U_s} 
		+ M_2(h_3) \frac{L_3}{k}e^{-ks}(2\twonorm{x-x'}+\twonorm{x-x'+(\eps'-\eps'')v'})\notag\\
		&+  M_2(h_3) \frac{L_3}{k}e^{-ks} ((\eps''+\eps')/2+\eps(3+\eps/\eps''+\eps/\eps'+\twonorm{x-x'}/\eps'))\notag\\
		&+  M_3(h_3) e^{-3ks/2}(\twonorm{x-x'+(\eps'-\eps'')v'}+(\eps''+\eps')/2+\eps(3+\eps/\eps''+\eps/\eps')/3). \notag\\
	\leq &\,\inner{\grad h_3(\process{s}{x'+\eps''v'})}{U_s} +  M_2(h_3) \frac{3L_3}{k}e^{-ks} c_1
		+  M_3(h_3) e^{-3ks/2}c_2, \label{eqn:pre-contract3}\\
	\leq &\,M_1(h_3)\twonorm{U_s} +  M_2(h_3) \frac{3L_3}{k}e^{-ks} c_1
		+  M_3(h_3) e^{-3ks/2}c_2, \label{eqn:pre-function-contract3}
\ealign
where we have applied the triangle inequality to achieve \eqref{eqn:pre-contract3}.
Applying the bound \eqref{eqn:pre-contract3} to the thrice continuously differentiable function $h_3(z)=\inner{U_s}{\grad\log p(z)}$ with
$ %
M_2(h_3) = \sup_{z\in\reals^d}\opnorm{\grad^3\log p(z)[U_s]} 
	\leq L_3\twonorm{U_s}\text{ and } 
M_3(h_3) %
	\leq L_4\twonorm{U_s}
$ %
gives
\baligns
&(h_{3}(\process{s}{x'+\eps'' v'+\eps v}) - h_{3}(\process{s}{x'+\eps''v'}) 
		-(h_{3}(\process{s}{x'+\eps v})-h_{3}(\process{s}{x'})))/(\eps\eps'') \\
	&\quad- (h_{3}(\process{s}{x+\eps' v'+\eps v}) - h_{3}(\process{s}{x+\eps'v'}) 
		-(h_{3}(\process{s}{x+\eps v})-h_{3}(\process{s}{x})))/(\eps\eps') \\
	&\leq\ \inner{U_s}{\Hess \log p(\process{s}{x'+\eps''v'})U_s}  
		+ \twonorm{U_s}(\frac{3L_3^2}{k}e^{-ks} c_1
		+ L_4e^{-3ks/2}c_2). \notag \\
	&\leq -k\twonorm{U_s}^2 
		+ \twonorm{U_s}\frac{3L_3^2}{k}e^{-ks} c_1
		+ \twonorm{U_s}L_4e^{-3ks/2}c_2. \notag 
\ealigns
In the final line, we used the $k$-strong log-concavity of $p$.
Our efforts now yield \eqref{eqn:contract3} via
\baligns
e^{kt/2}\twonorm{U_t} 
	\leq \int_0^t\frac{3L_3^2}{2k}e^{-ks/2} c_1 + \frac{L_4}{2}e^{-ks}c_2ds
	\leq \frac{3L_3^2}{k^2} c_1 + \frac{L_4}{2k}c_2.
\ealigns
The third-order differenced function bound \eqref{eqn:function-contract3} then follows by applying the third-order synchronous coupling bound \eqref{eqn:contract3} to the estimate \eqref{eqn:pre-function-contract3}.
\end{proof}
\subsection{Proof of \thmref{concave-stein-factors}}
By \lemref{langevin-properties}, for each $x\in\reals^d$, the overdamped Langevin diffusion $(\process{t}{x})_{t\geq0}$ is well-defined with stationary distribution $P$.
Moreover, for each $x\in\reals^d$, the diffusion $(\process{t}{x})_{t\geq0}$, by definition, satisfies
\[
	d\process{t}{x} = \frac{1}{2}\grad\log p(\process{t}{x}) dt + dW_t\qtext{with} \process{0}{x}=x, 
\]
for $(W_t)_{t\geq 0}$ a $d$-dimensional Wiener process.
In what follows, when considering the joint distribution of a finite collection of overdamped Langevin diffusions, we will assume that the diffusions are coupled in the manner of \lemref{synchronous}, so that each diffusion is driven by a shared $d$-dimensional Wiener process $(W_t)_{t\geq 0}$.

Fix any $x\in\reals^d$ and any $h \in C^3(\reals^d)$ with bounded first, second, and third derivatives.
We divide the remainder of our proof into five components, 
establishing that $u_h$ exists, $u_h$ is Lipschitz, $u_h$ has a Lipschitz gradient, $u_h$ has a Lipschitz Hessian,
and $u_h$ solves the Stein equation \eqref{eqn:stein-equation-generator}.

\paragraph{Existence of $u_h$}
To see that the integral representation of $u_h(x)$ is well-defined, note that
\baligns
&\int_0^\infty|\Esubarg{P}{h(\PVAR)} - \Earg{h(\process{t}{x})}|\ dt
	= \int_0^\infty\left|\int_{\reals^d} \Earg{h(\process{t}{y})} - \Earg{h(\process{t}{x})}\ p(y) dy\right|\ dt \\
	&\leq  M_1(h)  \int_0^\infty\int_{\reals^d} \Earg{\twonorm{\process{t}{y} - \process{t}{x}}}\ p(y) dy\ dt 
	\leq  M_1(h) \, \Esubarg{P}{\twonorm{Z-x}} \int_0^\infty e^{-kt/2}\ dt 
	< \infty.
\ealigns
The first relation uses the stationarity of $P$, the second uses the Lipschitz relation \eqref{eqn:lipk},
the third uses the first-order coupling inequality \eqref{eqn:contract1} of \lemref{synchronous},
and the last uses the fact that strongly log-concave distributions have subexponential tails and therefore finite moments of all orders~\cite[Lem. 1]{CuleSa10}.

\paragraph{Lipschitz continuity of $u_h$}
We next show that $u_h$ is Lipschitz.
Fix any vector $v\in \reals^d$, and consider the difference
\balign \notag
|u_h(x+v) - u_h(x)|
	&=\left| \int_0^\infty \Earg{h(\process{t}{x})-h(\process{t}{x+v})}\ dt\right|  \notag
	\leq  M_1(h) \int_0^\infty \Earg{\twonorm{\process{t}{x}-\process{t}{x+v}}}\ dt \\
	&\leq \twonorm{v}   M_1(h) \int_0^\infty e^{-kt/2}\ dt 
	= \frac{2}{k} \twonorm{v}  M_1(h) . \label{eqn:first-order-u-bound}
\ealign
The second relation is an application of the Lipschitz relation \eqref{eqn:lipk}, and the third applies the first-order coupling inequality \eqref{eqn:contract1} of \lemref{synchronous}.

\paragraph{Lipschitz continuity of $\grad u_h$}
To demonstrate that $u_h$ is differentiable with Lipschitz gradient, we first establish a weighted second-order difference inequality for $u_h$.

\begin{lemma}\label{lem:second-order-u}
For any vectors $x,x',v' \in \reals^d$ with $\twonorm{v'}=1$ and weights $\eps',\eps'' > 0$, 
\begin{gather}
\left|(u_h(x'+\eps''v')-u_h(x'))/\eps'' - (u_h(x+\eps' v')-u_h(x))/\eps'\right| \notag\\
	\leq (\twonorm{x-x'}+(\eps''+\eps')/2)\left( M_1(h) \frac{2L_3}{k^2}
		+  M_2(h) \frac{1}{k}\right). \label{eqn:second-order-u}
\end{gather}
\end{lemma}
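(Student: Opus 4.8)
The plan is to reduce the claimed weighted second-order difference inequality for $u_h$ to the second-order differenced function bound \eqref{eqn:function-contract2} of \lemref{synchronous}, which already contains all of the analytic content; the rest is bookkeeping with the integral representation of $u_h$.

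First I would expand the left-hand side using $u_h(x) = \int_0^\infty \Esubarg{P}{h(\PVAR)} - \Earg{h(\process{t}{x})}\,dt$. The constant terms $\Esubarg{P}{h(\PVAR)}$ cancel in every difference, and since the existence argument shows each of the four integrals $\int_0^\infty \Esubarg{P}{h(\PVAR)} - \Earg{h(\process{t}{\cdot})}\,dt$ converges absolutely, I may recombine them and write
\[
\frac{u_h(x'+\eps''v')-u_h(x')}{\eps''} - \frac{u_h(x+\eps' v')-u_h(x)}{\eps'} = -\int_0^\infty \Earg{D_t}\,dt,
\]
where $D_t \defeq (h(\process{t}{x'+\eps''v'}) - h(\process{t}{x'}))/\eps'' - (h(\process{t}{x+\eps' v'}) - h(\process{t}{x}))/\eps'$ and all the diffusions are synchronously coupled as in \lemref{synchronous} (driven by a common Wiener process).

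Next I would bound $\Earg{D_t}$ on both sides. Applying the second-order differenced function bound \eqref{eqn:function-contract2} with $h_2 = h$ — legitimate since $h \in C^3(\reals^d)\subset C^2(\reals^d)$ and $M_1(h), M_2(h)$ are finite by assumption — and taking expectations (the right-hand side of \eqref{eqn:function-contract2} is deterministic) gives $\Earg{D_t} \leq \bigl(M_1(h)\frac{L_3}{k}e^{-kt/2} + M_2(h)e^{-kt}\bigr)(\twonorm{x-x'}+(\eps''+\eps')/2)$. Since \eqref{eqn:function-contract2} is one-sided, I would reapply it with $h_2 = -h$, using $M_1(-h)=M_1(h)$ and $M_2(-h)=M_2(h)$, to obtain the matching lower bound on $\Earg{D_t}$; together these yield $|\Earg{D_t}| \leq \bigl(M_1(h)\frac{L_3}{k}e^{-kt/2} + M_2(h)e^{-kt}\bigr)(\twonorm{x-x'}+(\eps''+\eps')/2)$. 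Integrating over $t\in[0,\infty)$ with $\int_0^\infty e^{-kt/2}\,dt = 2/k$ and $\int_0^\infty e^{-kt}\,dt = 1/k$ produces exactly the asserted bound $(\twonorm{x-x'}+(\eps''+\eps')/2)\bigl(M_1(h)\frac{2L_3}{k^2} + M_2(h)\frac{1}{k}\bigr)$.

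There is no substantive obstacle here: the synchronous coupling, the \Ito-comparison estimate, and the use of strong log-concavity are all packaged inside \lemref{synchronous}, so the only points requiring care are the absolute convergence used to split and recombine the defining integrals (supplied by the existence estimate) and the observation that \eqref{eqn:function-contract2} is an upper bound only, which the $h\mapsto -h$ symmetry of the Lipschitz constants converts into the desired absolute-value estimate.
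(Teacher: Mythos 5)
Your proof is correct and follows essentially the same route as the paper's: expand $u_h$ via its integral representation with synchronously coupled diffusions, apply the second-order differenced function bound \eqref{eqn:function-contract2}, and integrate the exponential factors to obtain $\frac{2L_3}{k^2}$ and $\frac{1}{k}$. Your explicit treatment of the one-sidedness of \eqref{eqn:function-contract2} via $h_2 = -h$, and of the absolute convergence needed to split and recombine the defining integrals, makes rigorous two small points the paper leaves implicit, but the argument is the same.
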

\begin{proof}
We apply the \lemref{synchronous} second-order function coupling inequality \eqref{eqn:function-contract2} to obtain
\baligns
&\left|(u_h(x'+\eps''v')-u_h(x'))/\eps'' - (u_h(x+\eps' v')-u_h(x))/\eps'\right| \notag\\ 
	= &\left|\int_0^\infty \Earg{h(\process{t}{x'+\eps'v})-h(\process{t}{x'})}/\eps' - \Earg{h(\process{t}{x+\eps v})-h(\process{t}{x})}/\eps\ dt \right| \notag\\
	\leq & (\twonorm{x-x'} + (\eps' + \eps)/2)\int_0^\infty M_1(h) \frac{L_3}{k}e^{-kt/2}
		+  M_2(h) e^{-kt} \ dt.%
\ealigns
The desired bound follows by integrating the final expression.
\end{proof}

Now, fix any $x, v\in\reals^d$ with $\twonorm{v}=1$.
As a first application of the \lemref{second-order-u} second-order difference inequality \eqref{eqn:second-order-u}, we will demonstrate the existence of the directional derivative
\balign\label{eqn:directional-deriv-u}
	\grad_v u_h(x) \defeq \lim_{\epsilon \to 0} \frac{u_h(x+\epsilon v)-u_h(x)}{\epsilon}.
\ealign
Indeed, \lemref{second-order-u} implies that, for any integers $m,m' > 0$, 
\baligns
		&\left|m'(u_h(x+v/m')-u_h(x)) - m(u_h(x+v/m)-u_h(x))\right| \\
	\leq &\left(\frac{1}{2m}+\frac{1}{2m'}\right)\left( M_1(h) \frac{2L_3}{k^2} +  M_2(h) \frac{1}{k}\right).
\ealigns
Hence, the sequence $\left(\frac{u_h(x + v/m) - u_h(x)}{1/m}\right)_{m=1}^\infty$ is Cauchy, and the directional derivative \eqref{eqn:directional-deriv-u} exists.

To see that the directional derivative \eqref{eqn:directional-deriv-u} is also Lipschitz, fix any $v'\in\reals^d$, and consider the bound
\balign\label{eqn:second-order-u-bound}
&\left|\grad_v u_h(x+v') - \grad_v u_h(x)\right| 
	\leq \lim_{\epsilon \to 0}\left| \frac{u_h(x+\epsilon v+v')-u_h(x+v')}{\epsilon} -  \frac{u_h(x+\epsilon v)-u_h(x)}{\epsilon}\right| \notag\\ 
	\leq &\lim_{\epsilon \to 0}(\twonorm{v'}+\eps)\left( M_1(h) \frac{2L_3}{k^2}
		+  M_2(h) \frac{1}{k}\right) 
	= \twonorm{v'}\left( M_1(h) \frac{2L_3}{k^2}
		+  M_2(h) \frac{1}{k}\right),
\ealign
where the second inequality follows from \lemref{second-order-u}.
Since each directional derivative is Lipschitz continuous, we may conclude that $u_h$ is continuously differentiable 
with Lipschitz continuous gradient $\grad u_h$.
Our Lipschitz function deduction \eqref{eqn:first-order-u-bound} and the Lipschitz relation \eqref{eqn:lipk} additionally supply the uniform bound
$
M_1(u_h) %
	\leq \frac{2}{k}  M_1(h) . 
$

\paragraph{Lipschitz continuity of $\Hess u_h$}
To demonstrate that $\grad u_h$ is differentiable with Lipschitz gradient, we begin by establishing a weighted third-order difference inequality for $u_h$.

\begin{lemma}\label{lem:third-order-u}
Fix any vectors $x,x',v,v' \in \reals^d$ with $\twonorm{v}=\twonorm{v'}=1$ and weights $\eps,\eps',\eps'' > 0$, 
and define $f_1(x,x',\eps,\eps',\eps'')$ and $f_2(x,x',\eps,\eps',\eps'')$ as in \eqref{eqn:growth-factors} .  Then, 
\balign\label{eqn:third-order-u}
&|({u_h(x'+\eps'' v'+\eps v)-u_h(x'+\eps'' v')-(u_h(x'+\eps v)-u_h(x')))}{/(\eps\epsilon'')} \notag\\
		&- ({u_h(x+\eps' v'+\eps v)-u_h(x+\eps' v')-(u_h(x+\eps v)-u_h(x)))}{/(\eps\epsilon')}| \\
\leq & \left(M_1(h) \frac{6L_3^2}{k^3}+M_2(h) \frac{3L_3}{k^2}\right)f_1(x,x',\eps,\eps',\eps'')
		+  \left(M_1(h) \frac{L_4}{k^2}+M_3(h) \frac{2}{3k}\right)f_2(x,x',\eps,\eps',\eps'').\notag
\ealign
\end{lemma}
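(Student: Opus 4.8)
The plan is to mirror the proof of \lemref{second-order-u}, raised by one order, turning the synchronous coupling bound \eqnref{function-contract3} into a bound on $u_h$ by integration in time. First I would substitute the integral representation $u_h(y) = \int_0^\infty \Esubarg{P}{h(\PVAR)} - \Earg{h(\process{t}{y})}\dt$ into the left-hand side of \eqnref{third-order-u}. The constant $\Esubarg{P}{h(\PVAR)}$ enters each of the two parenthesized second differences with signs $+1,-1,-1,+1$, so it cancels, and the weighted third-order difference of $u_h$ reduces to $-\int_0^\infty \Earg{D_t(h)}\dt$, where $D_t(h)$ denotes exactly the random quantity appearing on the left-hand side of \eqnref{function-contract3} with $h_3 = h$. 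Splitting this integral into its eight constituent expectations is legitimate because, as in the "Existence of $u_h$" step, the first-order contraction \eqnref{contract1} together with the Lipschitz identity \eqnref{lipk} gives $|\Earg{h(\process{t}{y}) - h(\process{t}{y'})}| \leq M_1(h)e^{-kt/2}\twonorm{y-y'}$, so every term is absolutely integrable over $[0,\infty)$.

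Next I would apply \lemref{synchronous}. The bound \eqnref{function-contract3} holds almost surely and its right-hand side is deterministic, so taking expectations yields
\[
\Earg{D_t(h)} \leq \left(M_1(h)\frac{3L_3^2}{k^2}e^{-kt/2} + M_2(h)\frac{3L_3}{k}e^{-kt}\right)f_1 + \left(M_1(h)\frac{L_4}{2k}e^{-kt/2} + M_3(h)e^{-3kt/2}\right)f_2,
\]
where $f_1 = f_1(x,x',\eps,\eps',\eps'')$ and $f_2 = f_2(x,x',\eps,\eps',\eps'')$. Since $M_j(-h) = M_j(h)$ and $f_1,f_2$ do not involve $h$, the same inequality holds with $D_t(h)$ replaced by $D_t(-h) = -D_t(h)$; combining the two directions bounds $|\Earg{D_t(h)}|$ by the displayed right-hand side, whence $\left|\int_0^\infty \Earg{D_t(h)}\dt\right| \leq \int_0^\infty (\text{right-hand side})\dt$.

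Finally I would evaluate $\int_0^\infty e^{-kt/2}\dt = 2/k$, $\int_0^\infty e^{-kt}\dt = 1/k$, and $\int_0^\infty e^{-3kt/2}\dt = 2/(3k)$, and collect: $\left(M_1(h)\frac{3L_3^2}{k^2}\cdot\frac{2}{k} + M_2(h)\frac{3L_3}{k}\cdot\frac{1}{k}\right) = M_1(h)\frac{6L_3^2}{k^3} + M_2(h)\frac{3L_3}{k^2}$ multiplies $f_1$, while $\left(M_1(h)\frac{L_4}{2k}\cdot\frac{2}{k} + M_3(h)\cdot\frac{2}{3k}\right) = M_1(h)\frac{L_4}{k^2} + M_3(h)\frac{2}{3k}$ multiplies $f_2$, which is exactly \eqnref{third-order-u}. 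I do not expect a genuine obstacle: the analytic content — the pathwise coupling contractions and the third-order Taylor bookkeeping — is already contained in \lemref{synchronous} and \lemref{differences}, and the only points needing care are the termwise absolute integrability used to split the integral (supplied by \eqnref{contract1}) and the passage from the one-sided coupling inequality to the absolute value via the $h \mapsto -h$ symmetry.
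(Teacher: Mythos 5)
Your proposal is correct and follows essentially the same route as the paper: substitute the integral representation of $u_h$, apply the third-order function coupling bound \eqref{eqn:function-contract3} from \lemref{synchronous} pointwise in $t$, and integrate the exponentials $e^{-kt/2}$, $e^{-kt}$, $e^{-3kt/2}$ to collect exactly the stated coefficients. Your two extra remarks --- absolute integrability (via \eqref{eqn:contract1}) to justify merging the eight integrals, and the $h\mapsto -h$ symmetry to pass from the one-sided coupling inequality to the absolute value --- are points the paper leaves implicit, but they are consistent with its argument rather than a different approach.
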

\begin{proof}
Introduce the shorthand $c_1 \defeq f_1(x,x',\eps,\eps',\eps'')$ and $c_2 \defeq f_2(x,x',\eps,\eps',\eps'')$.
We apply the \lemref{synchronous} third-order function coupling inequality \eqref{eqn:function-contract3} to the thrice continuously differentiable function $h$ to obtain
\baligns
&|({u_h(x'+\eps'' v'+\eps v)-u_h(x'+\eps'' v')-(u_h(x'+\eps v)-u_h(x')))}{/(\eps\epsilon'')} \notag\\
		&- ({u_h(x+\eps' v'+\eps v)-u_h(x+\eps' v')-(u_h(x+\eps v)-u_h(x)))}{/(\eps\epsilon')}| \\
	= &\bigg|\int_0^\infty \Earg{(h(\process{t}{x'+\eps'' v'+\eps v}) - h(\process{t}{x'+\eps''v'}) -(h(\process{t}{x'+\eps v}) - h(\process{t}{x'})))}{/(\eps\eps'')} \notag\\
		&-\Earg{(h(\process{t}{x+\eps' v'+\eps v}) - h(\process{t}{x+\eps'v'}) -(h(\process{t}{x+\eps v}) - h(\process{t}{x})))}{/(\eps\eps')}\ dt\bigg|\\
	\leq &\int_0^\infty
	    \left(M_1(h) \frac{3L_3^2}{k^2}e^{-kt/2} +M_2(h) \frac{3L_3}{k}e^{-kt}\right)c_1
		+ \left(M_1(h) \frac{L_4}{2k}e^{-kt/2}+  M_3(h) e^{-3kt/2}\right)c_2 \dt.	 
\ealigns
Integrating this final expression yields the advertised bound.
\end{proof}
Now, fix any $x, v, v'\in\reals^d$ with $\twonorm{v}=\twonorm{v'}=1$.
As a first application of the \lemref{third-order-u} third-order difference inequality \eqref{eqn:third-order-u}, we will demonstrate the existence of the second-order directional derivative
\balign\label{eqn:second-directional-deriv-u}
	\grad_{v'} \grad_v u_h(x) 
		&\defeq \lim_{\epsilon' \to 0} \frac{\grad_v u_h(x+\epsilon' v')- \grad_v u_h(x)}{\epsilon'} \\
		&= \lim_{\epsilon' \to 0} \lim_{\epsilon \to 0} \frac{u_h(x+\epsilon' v'+\epsilon v) - u_h(x+\epsilon v)-(u_h(x+\epsilon' v') - u_h(x))}{\epsilon\epsilon'}.\notag
\ealign
\lemref{third-order-u} guarantees that, for any integers $m,m' > 0$, 
\baligns
&\left|m'(\grad_v u_h(x+v'/m')-\grad_v u_h(x)) - m(\grad_v u_h(x+v'/m)-\grad_vu_h(x))\right| \\
	\leq &\lim_{\eps\to0} |m'(u_h(x+v'/m'+v\eps)-u_h(x+v'/m')-(u_h(x+v\eps)-u_h(x)))/\eps \\
		&\quad- m(u_h(x+v'/m+v\eps)-u_h(x+v'/m)-(u_h(x+v\eps) - u_h(x)))/\eps| \\
	\leq &\left(\frac{1}{m}+\frac{1}{m'}\right)\left( M_1(h) \left(\frac{3L_3^2}{k^3}+\frac{3L_4}{2k^2}\right)
		+  M_2(h) \frac{3L_3}{2k^2}
		+  M_3(h) \frac{1}{k}\right).
\ealigns
Hence, the sequence $\left(\frac{\grad_v u_h(x + v'/m) - \grad_v u_h(x)}{1/m}\right)_{m=1}^\infty$ is Cauchy, and the directional derivative \eqref{eqn:second-directional-deriv-u} exists.

To see that the directional derivative \eqref{eqn:second-directional-deriv-u} is also Lipschitz, fix any $v''\in\reals^d$, and consider the bound
\baligns
&\left|\grad_{v'}\grad_v u_h(x+v'') - \grad_{v'}\grad_v u_h(x)\right| \\
	\leq &\lim_{\epsilon' \to 0}\left| \frac{\grad_vu_h(x+v''+\epsilon' v')-\grad_vu_h(x+v'')}{\epsilon'} -  \frac{\grad_vu_h(x+\epsilon' v')-\grad_vu_h(x)}{\epsilon'}\right| \\ 
	\leq &\lim_{\epsilon' \to 0}\lim_{\epsilon \to 0}
		\bigg| \frac{u_h(x+v''+ \epsilon' v'+\eps v)-u_h(x+v''+\eps v)-(u_h(x+v''+\epsilon' v')-u_h(x+v''))}{\eps\epsilon'} \\
			&\quad\quad\quad-  \frac{u_h(x+\epsilon' v'+\eps v)-u_h(x+\eps v)-(u_h(x+\epsilon' v')-u_h(x))}{\eps\epsilon'}\bigg| \\ 
	\leq &\twonorm{v''}\left( M_1(h) \left(\frac{6L_3^2}{k^3}+\frac{L_4}{k^2}\right)
		+  M_2(h) \frac{3L_3}{k^2}
		+  M_3(h) \frac{2}{3k}\right),
\ealigns
where the final inequality follows from \lemref{third-order-u}.
Since each second-order directional derivative is Lipschitz continuous, we conclude that $u_h\in C^2(\reals^d)$ 
with Lipschitz continuous Hessian $\Hess u_h$.
Our Lipschitz gradient result \eqref{eqn:second-order-u-bound} and the Lipschitz relation \eqref{eqn:lipk} further furnish the uniform bound
$
M_2(u_h) %
	\leq  M_1(h) \frac{2L_3}{k^2}
		+  M_2(h) \frac{1}{k}. 
$
\paragraph{Solving the Stein equation}
Finally, we show that $u_h$ solves the Stein equation \eqref{eqn:stein-equation-generator}.
Introduce the notation $\transarg{t}{h}{x} \defeq \Earg{h(\process{t}{x})}$.
Since $(\trans{t})_{t\geq0}$ is strongly continuous 
on the Banach space $L$ of \lemref{langevin-properties} and $h\in L$,
the generator $\generator{}$, defined in \eqref{eqn:langevin-generator}, satisfies
\[
	h - \trans{t}{h} = \generator{\int_0^t \Esubarg{P}{h(\PVAR)} - \trans{s}{h}\, ds} \qtext{for all} t\geq 0
\]
by \cite[Prop. 1.5]{EthierKu86}.
The left-hand side limits in $L$ to $h - \Esubarg{P}{h(\PVAR)}$ as $t\to\infty$, as
\baligns
	|h(x) &- \Esubarg{P}{h(\PVAR)} - (h(x) - \transarg{t}{h}{x})|
		= \left|\int_{\reals^d} \Earg{h(\process{t}{y})} - \Earg{h(\process{t}{x})}\ p(y) dy\right| \\
		&\leq  M_1(h) \int_{\reals^d} \Earg{\twonorm{\process{t}{y} - \process{t}{x}}}\ p(y) dy
		\leq  M_1(h) \, \Esubarg{P}{\twonorm{Z-x}} e^{-kt/2}
\ealigns
for each $x\in\reals^d$ and $t\geq 0$. 
Here we have used the stationarity of $P$, the Lipschitz relation \eqref{eqn:lipk},
the first-order coupling inequality \eqref{eqn:contract1} of \lemref{synchronous}, and the integrability of $Z$~\cite[Lem. 1]{CuleSa10} in turn.
Meanwhile, the right-hand side limits to $\generator{u_h}$, since $\generator{}$ is closed \cite[Cor. 1.6]{EthierKu86}.
Therefore, $u_h$ solves the Stein equation \eqref{eqn:stein-equation-generator}.

\providecommand{\bysame}{\leavevmode\hbox to3em{\hrulefill}\thinspace}
\providecommand{\MR}{\relax\ifhmode\unskip\space\fi MR }
\providecommand{\MRhref}[2]{%
  \href{http://www.ams.org/mathscinet-getitem?mr=#1}{#2}
}
\providecommand{\href}[2]{#2}

\newcommand\acknowledgementbody{The authors thank Andreas Eberle for his suggestion to investigate triple couplings, the anonymous referee for his or her valuable feedback, and Andrew Duncan and Sebastian Vollmer for highlighting the need to identify an appropriate Banach space.
This material is based upon work supported by the National Science Foundation under Grant No.~1501767,
by the National Science Foundation Graduate Research Fellowship under Grant No.~DGE-114747,
and by the Frederick E.~Terman Fellowship.}

\vspace{\baselineskip}
\ACKNO{\acknowledgementbody}

\end{document}